\def\@@and{\MakeLowercase{and}}
\theoremstyle{definition}
\newtheorem{defn}{Definition}[section]
\newtheorem{exam}[defn]{Example}
\newtheorem{rem}[defn]{Remark}
\theoremstyle{plain}
\newtheorem{thm}[defn]{Theorem}
\newtheorem{lem}[defn]{Lemma}
\newtheorem{prop}[defn]{Proposition}
\newcommand{\dd}{\mathop{}\!\mathrm{d}}
\DeclareMathOperator{\udens}{\overline{dens}}
\DeclareMathOperator{\ldens}{\underline{dens}}
\DeclareMathOperator{\uDens}{\overline{Dens}}
\DeclareMathOperator{\lDens}{\underline{Dens}}
\DeclareMathOperator{\Dens}{Dens}
\DeclareMathOperator{\dens}{dens}
\title[D\MakeLowercase{ensity properties of orbits for a hypercyclic operator}] %Use the shortened version of the full title
{D\MakeLowercase{ensity properties of orbits for a hypercyclic operator on a }B\MakeLowercase{anach space}}
\author[J. L\MakeLowercase{i}]{J\MakeLowercase{ian} Li}
\address[J. Li]{Institute for  Mathematical Sciences and Artificial Intelligence \& Department of Mathematics,
	Shantou University, Shantou, 515821, Guangdong, China}
\email{lijian09@mail.ustc.edu.cn}
\urladdr{https://orcid.org/0000-0002-8724-3050}
\author[X. W\MakeLowercase{ang}]{X\MakeLowercase{insheng} Wang}
\address[X. Wang]{Department of Mathematics,
	Shantou University, Shantou, 515821, Guangdong, China}
\email{wangxs@stu.edu.cn}
\urladdr{https://orcid.org/0000-0002-5287-8902}
\author[J. Z\MakeLowercase{hao}]{J\MakeLowercase{ianjie} Zhao}
\address[Z. Zhao]{School of Mathematics, Hangzhou Normal University, Hangzhou, 311121, Zhejiang, China}
\email{zjianjie@hznu.edu.cn}
\urladdr{https://orcid.org/0000-0003-0790-7038}
\subjclass[2020]{Primary: 47A16; Secondary: 37B05}
\subjclass[2020]{Primary: 47A16; Secondary: 37B05}
\keywords{hypercyclicity, orbit, vector asymptotic to zero with density one, distributionally irregular vector, vector divergent to infinity with density one, $C_0$-semigroup}
\date{\today}
\begin{document}

\begin{abstract}
We study density properties of orbits for a hypercyclic operator $T$ on a separable Banach space $X$, and show that exactly one of the following four cases holds: 
(1) every vector in $X$ is asymptotic to zero with density one;
(2) generic vectors in $X$ are distributionally irregular of type $1$;
(3) generic vectors in $X$ are distributionally irregular of type $2\frac{1}{2}$ and no hypercyclic vector is distributionally irregular of type $1$;
(4) every hypercyclic vector in $X$ is divergent to infinity with density one. 
We also present some examples concerned with weighted backward shifts on $\ell^p$ to show that all the above four cases can occur. Furthermore, we show that similar results hold for $C_0$-semigroups.
\end{abstract}

\maketitle

\section{Introduction}
Dynamical systems is a branch of mathematics concerned with the asymptotic behavior of systems. 
The study of orbits generated by the system plays an important role for exploring the complexity of the asymptotic behavior. 
Plenty of results related to properties of orbits have been obtained for various systems under different settings. 
For the dynamical system generated by a linear operator on some linear space with certain structure, it seems that there is no complexity due to the linearity.
However, even for such a system, there can be abundant complex phenomena, meaning that it is significant and interesting to consider linear dynamics.  

One of the central concepts in the study of linear dynamics is the so-called hypercyclicity, which is closely related to numerous important properties of linear dynamics. 
Let $X$ be a Banach space and $T\colon X\to X$ be a bounded linear operator. 
A vector $x\in X$ is said to be \emph{hypercyclic}
if the orbit of $x$, $\{T^nx\colon n\geq 0\}$, is dense in $X$, and the operator $T$ is said to be \emph{hypercyclic} if there exists some hypercyclic vector in $X$. 
The set of all hypercyclic vectors of $T$ is denoted by $HC(T)$. Note that the concept of hypercyclicity is in fact the so-called transitivity for general topological dynamical systems. 
It is important to point out that linear structure can be made full use of for linear dynamics, compared to general dynamical systems, which means that we can consider the properties of linear dynamics from some new points of view. 
For a complete discussion on the dynamics of operators, the reader can refer to \cite{BM2009} and \cite{GP2011} and references therein.

As mentioned above, hypercyclic operators play a significant role in linear dynamics, making it useful and interesting to study their properties from various perspectives. 
In \cite{L2024} the first named author of this paper studied the behaviors of orbits in the mean sense of a hypercyclic operator on a Banach space.
In this paper, another aspect of the hypercyclic operator is considered, and we will study the density properties of orbits for such operators.

First of all, let us recall some concepts related to the topic of this paper. 
For a subset $A$ of $\mathbb{N}$, the \emph{upper density} and \emph{lower density} of $A$ are defined by
\[
\udens(A)=\limsup_{n\to\infty} \frac{ \#(A\cap [1,n])}{n},
\]
and 
\[
\ldens(A)=\liminf_{n\to\infty} \frac{ \#(A\cap [1,n])}{n},
\]
respectively, where $\#(\cdot)$  denotes the number of elements of a finite set.
If $\udens(A)=\ldens(A)$, then we say that the \emph{density} of $A$ exists, and denote it by $\dens(A)$.

Let $X$ be a Banach space with the norm $\Vert\cdot\Vert$ and $T\colon X\to X$ be a bounded linear operator from $X$ to itself. 
This paper focuses on the following properties of hypercyclic operators in terms of density. 
Following \cite{BBMP2011},  a vector $x\in X$ is said to be \emph{distributionally irregular of type $1$} for $T$ if there exist two subsets $A$ and $B$ of $\mathbb{N}$ such that $\udens(A)=\udens(B)=1$, $\lim\limits_{A\ni n\to\infty} \Vert T^n x\Vert =0$ and $\lim\limits_{B\ni n\to\infty} \Vert T^n x\Vert =\infty$. 
Inspired by the concept of distributional chaos of type $2\frac{1}{2}$ given in \cite{BBPW2018}, we introduce the concept of distributionally irregular of type $2\frac{1}{2}$. 
A vector $x\in X$ is said to be \emph{distributionally irregular of type $2\frac{1}{2}$} for $T$ if there exist two subsets $A$ and $B$ of $\mathbb{N}$ such that 
$\udens(A)\udens(B)>0$, $\lim\limits_{A\ni n\to\infty} \Vert T^n x\Vert =0$ and $\lim\limits_{B\ni n\to\infty} \Vert T^n x\Vert =\infty$. 
A vector $x\in X$ is said to be \emph{asymptotic to zero with density one} if there exists a subset $A$ of $\mathbb{N}$ with $\dens(A)=1$ such that $\lim\limits_{A\ni n\to\infty}\Vert T^n x\Vert =0$,
and \emph{divergent to infinity with density one}
if there exists a subset $B$ of $\mathbb{N}$ with $\dens(B)=1$ such that $\lim\limits_{B\ni n\to\infty}\Vert T^n x\Vert =\infty$.

In addition, we say that a property P on vectors in $X$ is \emph{generic} if the collection of vectors which satisfy the property P contains a dense $G_\delta$ subset of $X$.  
The following theorem is our main result in this paper.

\begin{thm}\label{thm:main-result}
If $T$ is a hypercyclic operator on a separable Banach space $X$, then exactly one of the following four assertions holds: 
\begin{enumerate}
    \item every vector in $X$ is asymptotic to zero with density one;
    \item generic vectors in $X$ are distributionally irregular of type $1$;
    \item generic vectors in $X$ are distributionally irregular of type $2\frac{1}{2}$, and no hypercyclic vector is distributionally irregular of type $1$;
    \item every hypercyclic vector in $X$ is divergent to infinity with density one.
\end{enumerate}
\end{thm}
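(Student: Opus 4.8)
The plan is to organize the four cases around the quantity $\|T^n x\|$ for a generic (or hypercyclic) vector $x$, and to extract the relevant densities from the Baire category behavior of the maps $x \mapsto \|T^n x\|$. The natural dividing quantities are, for a hypercyclic vector $x$, the sets
\[
Z_\varepsilon(x) = \{ n \in \mathbb{N} : \|T^n x\| < \varepsilon \}, \qquad
I_M(x) = \{ n \in \mathbb{N} : \|T^n x\| > M \},
\]
whose densities govern the four alternatives. First I would record the elementary but crucial fact that for a hypercyclic operator $T$ the set $HC(T)$ is a dense $G_\delta$, and that hypercyclicity of $x$ forces $\udens(Z_\varepsilon(x)) > 0$ and $\udens(I_M(x)) > 0$ for every $\varepsilon, M > 0$ — because the orbit must return near $0$ and near a vector of large norm infinitely often, and a standard "return time has positive upper density" argument (using that $T^n$ is continuous and open-mapping-type estimates, or simply transitivity applied on a suitable neighborhood basis) upgrades "infinitely often" to "positive upper density." This already shows $HC(T)$ consists of vectors which are at least distributionally irregular of type $2\frac12$, unless the relevant limits are actually $0$ or $\infty$; so the real content is sorting out \emph{when} the stronger density-one conclusions (1) and (4) hold, and \emph{when} type $1$ can be achieved generically.

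Next I would set up the generic side. Consider the set $G_0$ of vectors $x$ for which $\liminf_{n} \|T^n x\| = 0$ with the approach happening along a set of upper density one; more precisely, fix a sequence $\varepsilon_k \downarrow 0$ and numbers $n_j \uparrow \infty$, and describe the set $\{ x : \#\{ n \le n_j : \|T^n x\| < \varepsilon_k \} > (1 - 1/k) n_j \}$ as an open set, then intersect appropriately over $k$ and $j$ to get a $G_\delta$ set of vectors asymptotic to zero with density one. The dichotomy between (1) and the rest should come from asking whether this $G_\delta$ set is \emph{all} of $X$: if the subspace-like behavior forces every vector into it, we are in case (1); otherwise, it is either empty or a proper dense $G_\delta$. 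Symmetrically, build the $G_\delta$ set of vectors divergent to infinity with density one, and the $G_\delta$ set $D_1$ of type-$1$ distributionally irregular vectors (intersecting the density-one-near-zero condition with a density-one-large condition, along two interleaved sequences $n_j$). The key structural input is that each of these families, when nonempty, is automatically dense — this is where I would invoke hypercyclicity: $HC(T)$ is dense, hence any dense $G_\delta$ intersecting the closure of $HC(T)$-type behavior is dense, and one shows the relevant $G_\delta$ is nonempty iff it contains a hypercyclic vector, by a transitivity/limit-replacement argument.

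The logical skeleton then is: (a) if $D_1 \ne \emptyset$ then $D_1$ is a dense $G_\delta$, giving case (2); (b) if $D_1 = \emptyset$, look at whether \emph{some} hypercyclic vector fails to be divergent to infinity with density one — if so, that vector (being hypercyclic, hence having $\udens(I_M) > 0$ for all $M$, yet not density-one divergent, hence $\udens(Z_\varepsilon) > 0$ for some $\varepsilon$ — wait, one must argue $\limsup \|T^n x\| < \infty$ along a positive-density set) is distributionally irregular of type $2\frac12$, and one bootstraps to genericity of type $2\frac12$ while ruling out type $1$ generically (since $D_1 = \emptyset$), giving case (3); (c) if instead every hypercyclic vector is divergent to infinity with density one, that is case (4); (d) the remaining possibility, that there are \emph{no} hypercyclic vectors outside the density-one-to-zero class \emph{and} $D_1 = \emptyset$, should force case (1) — every hypercyclic vector is asymptotic to zero with density one, and then one propagates this to all of $X$ using linearity and the fact that hypercyclic vectors are dense, i.e.\ writing an arbitrary $x$ as a limit of hypercyclic vectors and controlling $\|T^n(x - h)\|$ uniformly is too strong, so more likely the argument is that $X = HC(T) \cup (X \setminus HC(T))$ and non-hypercyclic vectors have bounded orbits or at least satisfy the density-one-to-zero condition by a separate direct argument; this propagation is the delicate point.

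The main obstacle I anticipate is precisely step (d) and, relatedly, showing the four cases are genuinely \emph{exclusive and exhaustive} rather than merely a list — in particular, proving that failure of (2), (3), and (4) forces (1) for \emph{every} vector (not just hypercyclic ones), which requires understanding non-hypercyclic vectors, and proving that in case (3) the two conditions (generic type $2\frac12$ \emph{and} no hypercyclic type $1$) are compatible and mutually reinforcing. A secondary technical obstacle is the "positive upper density of return times" lemma for hypercyclic orbits — it is intuitively clear but needs the observation that if $\|T^{n_j} x\| < \varepsilon$ along a sparse sequence one can still not conclude positive density directly; the correct statement likely uses that the \emph{set} of good times has positive upper density because otherwise the orbit spends density-one time in a complement that would contradict hypercyclicity of $x$ with respect to both a neighborhood of $0$ and a neighborhood of a far-away point simultaneously — i.e.\ a pigeonhole between two disjoint density-one-demanding conditions. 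I would handle this by a clean lemma: for hypercyclic $x$, for every $\varepsilon > 0$, $\udens(\{n : \|T^n x\| < \varepsilon\}) > 0$ and $\udens(\{n : \|T^n x\| > 1/\varepsilon\}) > 0$, proved via transitivity applied along arithmetic-progression-type return arguments.
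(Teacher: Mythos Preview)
Your proposal contains a fundamental error that undermines the entire approach. You assert as an ``elementary but crucial fact'' that for a hypercyclic vector $x$ one has $\udens(Z_\varepsilon(x))>0$ and $\udens(I_M(x))>0$ for all $\varepsilon,M>0$, and you plan to upgrade ``infinitely often'' to ``positive upper density'' by a transitivity argument. This is \emph{false}: hypercyclicity guarantees only that the orbit visits every open set infinitely often, and there is no mechanism forcing positive upper density of return times. Indeed, case~(4) of the very theorem you are proving asserts that every hypercyclic vector diverges to infinity with density one, which forces $\udens(\{n:\Vert T^nx\Vert<R\})=0$ for every $R>0$; and case~(1) forces $\udens(\{n:\Vert T^nx\Vert>M\})=0$ for every $M$. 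So your claimed lemma is incompatible with two of the four alternatives, and consequently your sentence ``This already shows $HC(T)$ consists of vectors which are at least distributionally irregular of type $2\tfrac12$'' is wrong. You flag this lemma as a ``secondary technical obstacle,'' but it is in fact the primary one, and it cannot be overcome because the statement is not true.

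This error propagates into your logical skeleton. Your step~(b) triggers whenever $D_1=\emptyset$ and some hypercyclic vector fails to diverge to infinity with density one; but in case~(1) \emph{every} hypercyclic vector fails this (they go to zero with density one), so your scheme would attempt to place case~(1) under type~$2\tfrac12$. Your step~(d) is thus not genuinely disjoint from~(b). Moreover, the propagation in~(d) from ``all hypercyclic vectors'' to ``all vectors'' is, as you suspect, delicate, and your suggestion of writing $x$ as a limit of hypercyclic vectors will not work without additional structure.

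The paper's proof proceeds along entirely different lines. It first invokes a dichotomy (from \cite{JL2022}) between mean-L-stability and mean-L-unstability of $T$. Mean-L-stability, combined with hypercyclicity, yields case~(1) directly: one shows that the set of vectors asymptotic to zero with density one is a \emph{closed subspace}, and since it contains $HC(T)$ it equals $X$. In the mean-L-unstable case, $T$ is distributionally sensitive, so generic vectors are distributionally unbounded. The remaining trichotomy among (2), (3), (4) is governed by the Grivaux--Matheron parameter $c(T)=\sup_{R>0}\sup_{x\in HC(T)}\udens(\{n:\Vert T^nx\Vert<R\})$, according to whether $c(T)=1$, $0<c(T)<1$, or $c(T)=0$; the key input is that $c(T)$ is attained generically (Remark~\ref{rem:c-T-generic}) and bounds $\udens(Z_R(y))$ from above for \emph{every} hypercyclic $y$ (Proposition~\ref{prop:c-T-eq}). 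This parameter is precisely what replaces your nonexistent positive-density lemma: rather than claiming $\udens(Z_\varepsilon)>0$ always, one measures exactly how large it can be and organizes the cases around that value.
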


The paper is organized as follows. Theorem~\ref{thm:main-result} will be proved in Section 2.
In Section 3, we  present some examples concerned with weighted backward shifts on $\ell^p$ to show that all four cases in Theorem~\ref{thm:main-result} can occur. 
In Section 4, similar results for $C_0$-semigroups will be discussed.

\section{Proof of Theorem~\ref{thm:main-result}}

The aim of this section is to prove Theorem~\ref{thm:main-result}.
We begin with the following standard result on density; therefore, its proof is omitted.  
\begin{lem}\label{lem:density}
Let $(a_n)_n$ be a sequence in $\mathbb{R}$. Then
\begin{enumerate}\label{Analysis: goes to 0 in density one}
    \item there exists a subset $A$ of $\mathbb{N}$ with 
    $\udens(A)=1$ such that $\lim_{A\ni n\to\infty}|a_n|=0$
    if and only if for every $\varepsilon>0$,
    $\udens( \{n\in \mathbb{N}: |a_n|<\varepsilon \} )=1$;
    \item\label{lem_item2} there exists a subset $B$ of $\mathbb{N}$ with 
    $\dens(B)=1$ such that $\lim_{B\ni n\to\infty}|a_n|=0$
    if and only if for every $\varepsilon>0$,
    $\dens( \{n\in \mathbb{N}: |a_n|<\varepsilon \} )=1$. 
\end{enumerate}
\end{lem}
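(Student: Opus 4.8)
The plan is to prove both equivalences in the same way: the forward implications are a one-line consequence of the insensitivity of upper and lower density to finite modifications, while the backward implications are handled by a nested (``diagonal'') construction of the exceptional set. For the forward direction of (1), if $A\subseteq\mathbb{N}$ satisfies $\udens(A)=1$ and $|a_n|\to0$ along $A$, then for each $\varepsilon>0$ there is an $N$ with $A\cap[N,\infty)\subseteq\{n:|a_n|<\varepsilon\}$; since deleting finitely many integers does not change the upper density, $\udens(\{n:|a_n|<\varepsilon\})\ge\udens(A)=1$. The identical argument with $\ldens$ in place of $\udens$ gives the forward direction of (2).

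For the backward direction of (1), I would set $E_k=\{n:|a_n|<1/k\}$, so that $\udens(E_k)=1$ by hypothesis and $E_1\supseteq E_2\supseteq\cdots$. Choose integers $0=N_0<N_1<N_2<\cdots$ with $N_k>kN_{k-1}$ and $\#(E_k\cap[1,N_k])>(1-\tfrac1k)N_k$ (possible since $\udens(E_k)=1$), and put $A=\bigcup_{k\ge1}\bigl(E_k\cap(N_{k-1},N_k]\bigr)$. Any $n\in A$ with $n>N_{k-1}$ lies in some $E_j$ with $j\ge k$, hence $|a_n|<1/k$, which shows $|a_n|\to0$ along $A$; and $\#(A\cap[1,N_k])\ge\#(E_k\cap[1,N_k])-N_{k-1}>(1-\tfrac1k)N_k-N_{k-1}>(1-\tfrac2k)N_k$, so $\udens(A)=1$.

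For the backward direction of (2) the construction is similar, but now the lower density must be controlled. With $E_k$ as above, $\dens(E_k)=1$ produces an $M_k$ with $\#(E_k\cap[1,n])/n>1-\tfrac1k$ for all $n\ge M_k$; after arranging $M_1<M_2<\cdots$ I would set $B\cap[M_k,M_{k+1})=E_k\cap[M_k,M_{k+1})$ for each $k\ge1$ (and $B\cap[1,M_1)=\varnothing$). Convergence $|a_n|\to0$ along $B$ follows as before. For the density, the nestedness $E_{k+1}\subseteq E_k$ gives $B\cap[M_1,n]\supseteq E_k\cap[M_1,n]$ whenever $M_k\le n<M_{k+1}$, so $\#(B\cap[1,n])/n>1-\tfrac1k-M_1/n$; letting $n\to\infty$ through these ranges forces $\ldens(B)=1$, hence $\dens(B)=1$. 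The one point requiring care is exactly this last step: unlike part (1), part (2) needs a lower bound on $\#(E_k\cap[1,n])/n$ that is uniform in $n\ge M_k$, and it is the nestedness of the sets $E_k$ that lets a single set $B$ inherit a good density bound on \emph{every} initial segment rather than merely along a subsequence. Everything else is routine bookkeeping.
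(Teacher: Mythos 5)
Your proof is correct. The paper omits the proof of this lemma as standard, and your argument — trivial forward directions via finite modification of a set not affecting density, plus the nested/diagonal construction over $E_k=\{n:|a_n|<1/k\}$ for the backward directions, with the extra uniform-in-$n$ control needed for the lower density in part (2) — is exactly the standard one the authors have in mind.
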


By a \emph{topological dynamical system}, we mean a pair $(X,T)$,
where $X$ is a metric space with the metric $d$ and $T\colon X\to X$ is a continuous map.
Following~\cite{F1951} and~\cite{LTY2015}, 
we say that a topological dynamical system $(X,T)$ 
is \emph{mean-L-stable} 
if for every $\varepsilon>0$ there exists a $\delta>0$
such that for any $x,y\in X$ with $d(x,y)<\delta$, one has
\[
    \udens(\{n\in\mathbb{N}\colon d(T^nx,T^ny)\geq \varepsilon\})<\varepsilon,
\]
and \emph{mean-L-unstable}
if  there exists a $\delta>0$
such that for any non-empty open subset $U$ of $X$ there exist $x,y\in U$ such that
\[
    \udens(\{n\in\mathbb{N}\colon d(T^nx,T^ny)> \delta\})\geq \delta.
\]

We now state the following dichotomy theorem.
\begin{thm}[\cite{JL2022}*{Theorem 5.13}]
\label{thm:meanLstableunstable}
Let $T$ be a bounded linear operator on a Banach space $X$. Then $T$ is either mean-L-stable or mean-L-unstable.
\end{thm}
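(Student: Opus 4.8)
The plan is to prove the dichotomy by showing that whenever $T$ fails to be mean-L-stable it must be mean-L-unstable, and the decisive feature making this work is the homogeneity of the linear dynamics. First I would record the elementary identity that for a \emph{linear} $T$ and any $x,y\in X$ one has $d(T^nx,T^ny)=\Vert T^nx-T^ny\Vert=\Vert T^n(x-y)\Vert$. Writing $z=x-y$, this recasts both notions purely in terms of the single vector $z$ ranging over a neighbourhood of the origin; in particular, as $x,y$ range over the ball $B(x_0,r)$ the difference $z$ ranges over $\{z\colon\Vert z\Vert<2r\}$. Thus mean-L-stability is equivalent to: for every $\varepsilon>0$ there is $\delta>0$ such that $\Vert z\Vert<\delta$ implies $\udens(\{n\colon\Vert T^nz\Vert\ge\varepsilon\})<\varepsilon$, and mean-L-instability to: there is $\delta>0$ such that every neighbourhood of $0$ contains some $z$ with $\udens(\{n\colon\Vert T^nz\Vert>\delta\})\ge\delta$.

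Next I would negate mean-L-stability. The negation produces a single $\varepsilon_0>0$ with the property that for every $\delta>0$ there exists $z$ with $\Vert z\Vert<\delta$ and $\udens(\{n\colon\Vert T^nz\Vert\ge\varepsilon_0\})\ge\varepsilon_0$. The essential point of the linear setting is that this failure, witnessed by vectors $z$ arbitrarily close to $0$, can be transported into an arbitrary open set by translation. Indeed, given a non-empty open $U$, fix a ball $B(x_0,r)\subseteq U$; applying the negated statement with $\delta=r$ yields $z$ with $\Vert z\Vert<r$, so that $x=x_0+z$ and $y=x_0$ both lie in $U$ and satisfy $x-y=z$, and hence $d(T^nx,T^ny)=\Vert T^nz\Vert$ for all $n$.

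It then remains only to match the constants. Setting $\delta_0=\varepsilon_0/2$ and using the inclusion $\{n\colon\Vert T^nz\Vert\ge\varepsilon_0\}\subseteq\{n\colon\Vert T^nz\Vert>\delta_0\}$, I obtain $\udens(\{n\colon d(T^nx,T^ny)>\delta_0\})\ge\varepsilon_0\ge\delta_0$. Since $U$ was an arbitrary non-empty open set, this exhibits $\delta_0$ as a witness of mean-L-instability, which establishes the dichotomy; the two cases are moreover incompatible, as testing mean-L-stability against the instability threshold $\varepsilon=\delta_0$ immediately contradicts the instability inequality.

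I expect the only genuine subtlety, rather than a true obstacle, to be the careful handling of the universal quantifier ``for every non-empty open subset $U$'' in the definition of mean-L-instability: one must verify that instability found arbitrarily near $0$ actually reappears inside every open set, which is precisely where the translation $x\mapsto x_0+x$ and the homogeneity $d(T^nx,T^ny)=\Vert T^n(x-y)\Vert$ are indispensable, since a general non-linear system admits no such propagation. The remaining fiddly point is the bookkeeping of the strict versus non-strict inequalities and of the gap between the two thresholds, which is resolved cleanly by the choice $\delta_0=\varepsilon_0/2$.
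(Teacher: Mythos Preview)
Your argument is correct. The paper itself does not supply a proof of this dichotomy: it is quoted verbatim as \cite{JL2022}*{Theorem~5.13} and used as a black box. You have instead written out the natural direct proof, and every step checks: the reformulation in terms of a single vector $z=x-y$ via linearity is valid, the negation of mean-L-stability is stated correctly, the translation trick $x=x_0+z$, $y=x_0$ places the witnessing pair inside an arbitrary open set, and the choice $\delta_0=\varepsilon_0/2$ handles the strict/non-strict inequality discrepancy cleanly. Your final remark on mutual exclusivity is also right.

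So the only ``difference'' from the paper is that you prove what the paper merely cites; there is nothing to compare at the level of strategy. Your proof is, in fact, essentially the argument one finds in the cited source (the key ingredient being exactly the translation-invariance/homogeneity you highlight), so nothing exotic is happening. If anything, you could streamline slightly: the parenthetical about differences ranging over $\{z:\Vert z\Vert<2r\}$ is not used in the main argument and can be dropped, since you only need that $x_0+z,x_0\in B(x_0,r)$ when $\Vert z\Vert<r$.
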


We have an equivalent characterization concerned with mean-L-stability in terms of density under some conditions, the following proposition plays a key role in such a characterization.
\begin{prop} \label{prop:mean-l-stable}
Let $T$ be a bounded linear operator on a Banach space $X$.
Assume that $T$ is mean-L-stable. 
Then we have the following:
\begin{enumerate}
    \item For a vector $x\in X$, if $\liminf_{n\to\infty}\Vert T^nx\Vert= 0$,
    then $x$ is asymptotic to zero with density one.
    \item The collection of vectors in $X$ which are asymptotic to zero with density one is a closed subspace of $X$.
\end{enumerate}
\end{prop}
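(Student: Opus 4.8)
The plan is to first rewrite mean-L-stability in a translation-free form that uses linearity: since the metric on $X$ comes from the norm, $d(T^nx,T^ny)=\Vert T^n(x-y)\Vert$, so taking $z=x-y$ (with $y=0$, $x=z$ to see every small $z$ occurs) the hypothesis becomes \emph{for every $\varepsilon>0$ there is $\delta>0$ such that $\Vert z\Vert<\delta$ implies $\udens(\{n\in\mathbb{N}\colon \Vert T^nz\Vert\ge\varepsilon\})<\varepsilon$}. Throughout I will use freely that the upper density is subadditive, unchanged under finite modifications, and unchanged under a translation $S\mapsto S+m$; and that if $\dens(A)=\dens(B)=1$ then $\dens(A\cap B)=1$.

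For (1), fix $\varepsilon>0$ and an auxiliary $\eta\in(0,\varepsilon)$. Let $\delta>0$ be given by the translation-free form applied with parameter $\eta$. Since $\liminf_{n\to\infty}\Vert T^nx\Vert=0$, there is $m$ with $\Vert T^mx\Vert<\delta$; applying the hypothesis to $z=T^mx$ yields $\udens(\{n\colon \Vert T^{n+m}x\Vert\ge\eta\})<\eta$. The set $\{k\colon\Vert T^kx\Vert\ge\eta\}$ differs from the translate $\{n\colon\Vert T^{n+m}x\Vert\ge\eta\}+m$ by the finite set $\{k\le m\colon\Vert T^kx\Vert\ge\eta\}$, so $\udens(\{k\colon\Vert T^kx\Vert\ge\eta\})<\eta$, and since $\{n\colon\Vert T^nx\Vert\ge\varepsilon\}\subseteq\{n\colon\Vert T^nx\Vert\ge\eta\}$ we get $\udens(\{n\colon\Vert T^nx\Vert\ge\varepsilon\})<\eta$. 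As $\eta\in(0,\varepsilon)$ was arbitrary this upper density is $0$, i.e.\ $\dens(\{n\colon\Vert T^nx\Vert<\varepsilon\})=1$; as $\varepsilon>0$ was arbitrary, the second item of Lemma~\ref{lem:density} gives that $x$ is asymptotic to zero with density one.

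For (2), write $W$ for the set of vectors asymptotic to zero with density one. Closure under scalars is immediate, and if $x,y\in W$ via density-one sets $A,B$ then $\dens(A\cap B)=1$ and $\Vert T^n(x+y)\Vert\le\Vert T^nx\Vert+\Vert T^ny\Vert\to0$ along $A\cap B$, so $x+y\in W$ and $W$ is a subspace. For closedness, let $x_k\in W$ with $x_k\to x$, fix $\varepsilon>0$ and $\eta\in(0,\varepsilon)$, take $\delta>0$ from mean-L-stability with parameter $\eta$, and choose $k$ with $\Vert x-x_k\Vert<\delta$. Then $\udens(\{n\colon\Vert T^nx-T^nx_k\Vert\ge\eta\})<\eta$, while $\udens(\{n\colon\Vert T^nx_k\Vert\ge\varepsilon-\eta\})=0$ since $x_k\in W$. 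Because $\Vert T^nx\Vert\ge\varepsilon$ forces $\Vert T^nx_k\Vert\ge\varepsilon-\eta$ or $\Vert T^nx-T^nx_k\Vert\ge\eta$, subadditivity gives $\udens(\{n\colon\Vert T^nx\Vert\ge\varepsilon\})<\eta$; letting $\eta\to0$ and then $\varepsilon\to0$ and applying the second item of Lemma~\ref{lem:density} shows $x\in W$.

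I do not expect a serious obstacle here: the whole proof is the translation-free reformulation of mean-L-stability plus elementary density bookkeeping. The one point needing care is the quantifier order — mean-L-stability only bounds the bad set's upper density by $\varepsilon$, not by $0$ — so in both parts one must interpose an auxiliary $\eta<\varepsilon$, apply stability with parameter $\eta$, and only afterwards let $\eta\to0$ to upgrade to density one; in part (1) one must also notice that the hypothesis $\liminf_{n\to\infty}\Vert T^nx\Vert=0$ is precisely what allows some iterate $T^mx$ to be placed inside the $\delta$-ball on which stability is applicable.
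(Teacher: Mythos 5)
Your proof is correct and follows essentially the same route as the paper's: both arguments reformulate mean-L-stability via linearity, use the hypothesis $\liminf_{n\to\infty}\Vert T^nx\Vert=0$ to place some iterate in the $\delta$-ball, exploit translation/finite-modification invariance of upper density, and interpose an auxiliary parameter ($\eta$ in yours, $\varepsilon'$ in the paper's) that is sent to $0$ before invoking Lemma~\ref{lem:density}~(2). The only cosmetic difference is in part (2), where you bound the bad set by subadditivity of upper density while the paper intersects the two good sets and bounds the lower density; these are equivalent.
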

\begin{proof}
\begin{enumerate}[fullwidth]
    \item Let $x\in X$ be a vector satisfying $\liminf_{n\to\infty}\Vert T^nx\Vert= 0$. Fix any $\varepsilon>0$. 
    Note that $T$ is mean-L-stable, which means for any $0<\varepsilon'<\varepsilon$ there exists $\delta>0$ such that for any $x$, $y\in X$ with $\Vert x-y\Vert <\delta$, we have
    \[
        \udens(\{n\in\mathbb{N}\colon \Vert T^nx- T^ny\Vert \geq \varepsilon'\})<\varepsilon'.
    \]
    By the assumption on $x$, there exists $k\in\mathbb{N}$ such that $\Vert T^kx\Vert <\delta$. Then for $T^kx$ and $\mathbf{0}$, we know that
    \[
        \udens(\{n\in\mathbb{N}\colon\Vert T^{n+k}x\Vert \geq \varepsilon'\})<\varepsilon',
    \]
    implying that
    \[
        \ldens(\{n\in\mathbb{N}\colon \Vert T^{n+k}x\Vert < \varepsilon'\})\geq1-\varepsilon'.
    \]
    Noticing that $\varepsilon'<\varepsilon$ we have
    \begin{align*}
        \ldens(\{n\in\mathbb{N}\colon \Vert T^{n}x\Vert <\varepsilon\})
        &\geq \ldens(\{n\in\mathbb{N}\colon \Vert T^{n}x\Vert <\varepsilon'\})\\
        &\geq\ldens(\{n\in\mathbb{N}\colon \Vert T^{n+k}x\Vert < \varepsilon'\})
        >1-\varepsilon'.
    \end{align*}
    Letting $\varepsilon'\to 0$ we obtain
    \[
        \ldens(\{n\in\mathbb{N}\colon \Vert T^{n}x\Vert <\varepsilon\})\geq 1,
    \]
    meaning that
    \[
        \dens(\{n\in\mathbb{N}\colon \Vert T^{n}x\Vert <\varepsilon\})=1.
    \]
    Noticing the arbitrariness of $\varepsilon$ and using Lemma \ref{lem:density} \eqref{lem_item2} we know that there exists a subset $A$ of $\mathbb{N}$ with $\dens(A)=1$ such that $\lim_{A\ni n\to\infty} \Vert T^n x\Vert =0$.
    Then $x$ is asymptotic to zero with density one.
    
    \item Let $X_0$ be the collection of vectors in $X$ which are asymptotic to zero with density one.
    For any $x, y\in X_0$, there exist two subsets $A_x, A_y\subset \mathbb{N}$ with $\dens(A_x)=1=\dens(A_y)$ such that 
    \[
      \lim_{A_x\ni n\to\infty} \Vert T^n x\Vert =0 \ \text{and}  \ \lim_{A_y\ni n\to\infty} \Vert T^n y\Vert =0. 
    \]
    Let $A=A_x\cap A_y$. Since $\dens(A_x)=\dens(A_y)=1$, it is easy to see that $\dens(A)=1$.
    On the other hand, clearly we have
    \begin{align*}
      0\leq&\lim_{A\ni n\to\infty} \Vert T^n (x+y)\Vert
      \leq\lim_{A\ni n\to\infty}\left(\Vert T^n x\Vert+\Vert T^n y)\Vert\right)\\
      =&\lim_{A\ni n\to\infty}\Vert T^n x\Vert+\lim_{A\ni n\to\infty}\Vert T^n y\Vert=0,
    \end{align*}
    which means that $x+y\in X_0$. It is easy to check that $rx\in X_0$ for any $r\in\mathbb{K}$, where $\mathbb{K}=\mathbb{R}\text{ or }\mathbb{C}$. 
    Now we have proved that $X_0$ is a subspace. In the following we will prove that $X_0$ is closed. Let $\{x_k\}_{k\in\mathbb{N}}$ be a sequence in $X_0$ with $\lim_{k\to\infty}x_k=x\in X$. Fix any $\varepsilon>0$. For any $0<\varepsilon'<\frac{\varepsilon}{2}$, noticing that $T$ is mean-L-stable, there exists $\delta>0$ such that for any $y$, $z\in X$ with $\Vert y-z\Vert<\delta$,
    \[
        \udens(\{n\in\mathbb{N}\colon\Vert T^n(y-z)\Vert\geq\varepsilon'\})<\varepsilon',
    \]
    that is
    \[
        \ldens(\{n\in\mathbb{N}\colon\Vert T^n(y-z)\Vert<\varepsilon'\})\geq 1-\varepsilon'.
    \]
    Choose $m>0$ such that $\Vert x_m-x\Vert<\delta$. Then we have 
    \begin{align*}
        &\ldens(\{n\in\mathbb{N}\colon\Vert T^n(x_m-x)\Vert<\varepsilon'\})\ge 1-\varepsilon'.
    \end{align*}
    Since $x_m\in X_0$, we know that $\dens(\{n\in\mathbb{N}\colon\Vert T^nx_m \Vert<\varepsilon'\})=1$. 
    Then we have
    \begin{align*}
    \ldens(\{n\in \mathbb{N}\colon \Vert T^nx \Vert <\varepsilon)\}
      &\ge  \ldens\bigl(\{n\in \mathbb{N}: \Vert T^nx_m \Vert <\varepsilon'\} \\
      &\qquad\qquad  \cap \{ n\in \mathbb{N}: \Vert T^nx_m-T^nx \Vert <\varepsilon'\} \bigr)\\ 
      &\ge 1-\varepsilon'.
    \end{align*}
    Letting $\varepsilon'\to 0$, we obtain that
    \[
     \ldens(\{n\in \mathbb{N}: \Vert T^nx \Vert <\varepsilon)\}=1. 
    \]
    This shows that $x\in X_0$ and then $X_0$ is closed. \qedhere
\end{enumerate}
\end{proof}

Now we can give the following characterization of mean-L-stability for hypercyclic operators.

\begin{prop}\label{prop:mean-L-stable} 
Let $T\colon X\to X$ be a hypercyclic operator on a separable Banach space $X$.  
Then $T$ is mean-L-stable if and only if every vector in $X$ is asymptotic to zero with  density one. 
\end{prop}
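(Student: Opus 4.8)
The plan is to prove the two implications separately: the ``if'' direction will follow from the dichotomy in Theorem~\ref{thm:meanLstableunstable}, and the ``only if'' direction from Proposition~\ref{prop:mean-l-stable} together with hypercyclicity.

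For the ``only if'' direction I would argue as follows. Suppose $T$ is mean-L-stable and let $X_0$ be the collection of vectors in $X$ that are asymptotic to zero with density one; by Proposition~\ref{prop:mean-l-stable}(2) it is a closed subspace of $X$. First I would check that $X_0$ is $T$-invariant: if $x\in X_0$ is witnessed by a set $A$ with $\dens(A)=1$ and $\Vert T^nx\Vert\to 0$ as $A\ni n\to\infty$, then $\{a-1\colon a\in A,\ a\ge 1\}$ still has density one and witnesses $Tx\in X_0$. Next, pick any hypercyclic vector $x_0\in HC(T)$. Since its orbit is dense in $X$, it comes arbitrarily close to $\mathbf{0}$, so $\liminf_{n\to\infty}\Vert T^nx_0\Vert=0$, and Proposition~\ref{prop:mean-l-stable}(1) gives $x_0\in X_0$. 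By $T$-invariance the entire orbit $\{T^nx_0\colon n\ge 0\}$ lies in $X_0$; as $X_0$ is closed and this orbit is dense, $X_0=X$, which is exactly the conclusion.

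For the ``if'' direction, assume every vector of $X$ is asymptotic to zero with density one. By Theorem~\ref{thm:meanLstableunstable} it is enough to show that $T$ is not mean-L-unstable. If it were, there would be a $\delta>0$ so that, applying the definition to the open set $U=X$, one finds $x,y\in X$ with $\udens(\{n\in\mathbb{N}\colon\Vert T^nx-T^ny\Vert>\delta\})\ge\delta$. Putting $z=x-y$ and using linearity together with the hypothesis on $z$, Lemma~\ref{lem:density}\eqref{lem_item2} yields $\dens(\{n\in\mathbb{N}\colon\Vert T^nz\Vert<\delta\})=1$, hence $\udens(\{n\in\mathbb{N}\colon\Vert T^nz\Vert\ge\delta\})=0$, contradicting the inequality just displayed. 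Therefore $T$ is mean-L-stable.

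I do not expect a genuine obstacle here, since most of the work is already packaged in Proposition~\ref{prop:mean-l-stable} and Theorem~\ref{thm:meanLstableunstable}. The only mildly delicate point is producing a hypercyclic vector inside $X_0$ and then passing from it to all of $X_0$; the $T$-invariance observation above handles this, but one could equally invoke the standard facts that $HC(T)$ is dense in $X$ and that each iterate $T^nx_0$ of a hypercyclic vector is again hypercyclic, so that $HC(T)\subseteq X_0$ directly.
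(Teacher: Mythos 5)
Your proof is correct. For the forward direction your argument is in substance the paper's: the paper simply notes that \emph{every} hypercyclic vector $x$ satisfies $\liminf_{n\to\infty}\Vert T^nx\Vert=0$, hence $HC(T)\subseteq X_0$ by Proposition~\ref{prop:mean-l-stable}(1), and concludes from the density of $HC(T)$ and the closedness of $X_0$; this is exactly the alternative you mention at the end, and your main route (one hypercyclic vector plus $T$-invariance of $X_0$) is an interchangeable variant of it. The only real divergence is in the reverse direction: you argue by contradiction through the dichotomy of Theorem~\ref{thm:meanLstableunstable}, showing that mean-L-unstability applied to $U=X$ would produce a $z=x-y$ with $\udens(\{n\colon\Vert T^nz\Vert>\delta\})\geq\delta$, contradicting $\dens(\{n\colon\Vert T^nz\Vert<\delta\})=1$. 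The paper instead verifies the definition of mean-L-stability directly: for any $x,y\in X$ and $\varepsilon>0$, the hypothesis together with Lemma~\ref{lem:density}\eqref{lem_item2} gives $\dens(\{n\colon\Vert T^n(x-y)\Vert<\varepsilon\})=1$, so the exceptional set has upper density $0<\varepsilon$ for every $\delta$. The direct verification is slightly cleaner since it avoids invoking the (nontrivial) dichotomy theorem and in fact shows mean-L-stability holds with any choice of $\delta$; your contradiction argument buys nothing extra here, but it is perfectly valid given that Theorem~\ref{thm:meanLstableunstable} is already available.
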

\begin{proof}
 ``$\Rightarrow$"
For every hypercyclic vector $x\in X$,
 as $\liminf_{n\to\infty} \Vert T^nx\Vert =0$,
 by Proposition~\ref{prop:mean-l-stable} (1),
 $x$ is asymptotic to zero with density one.
By the fact that $HC(T)$ is dense in $X$
and Proposition~\ref{prop:mean-l-stable} (2), 
every vector in $X$ is asymptotic to zero with density one.

``$\Leftarrow$'' By the assumption and Lemma \ref{lem:density} \eqref{lem_item2}, for any $z\in X$ and any $\varepsilon>0$ we have
\[
    \dens(\{n\in\mathbb{N}\colon\Vert T^nz \Vert<\varepsilon\})=1.
\]
Then for any $x$, $y\in X$ and any $\varepsilon>0$ we have
\begin{align*}
\dens(\{n\in\mathbb{N}\colon\Vert T^n(x-y)\Vert<\varepsilon\})
=1.
\end{align*}
The arbitrariness of $\varepsilon$ shows that $T$ is mean-L-stable.
\end{proof}

A characterization of mean-L-unstability can also be formulated, involving the so-called distributional sensitivity. 
We say that a topological dynamical system 
$(X,T)$ is \emph{distributionally sensitive} if there exists $\delta>0$ such that
for any $x\in X$ and $\varepsilon>0$, there exists $y\in X$ with $d(x,y)<\varepsilon$ such that
\[
    \udens(\{n\in\mathbb{N}\colon d(T^nx,T^ny)> \delta\})=1.
\]
In addition, for a bounded linear operator $T$ on a Banach space $X$, we say that a vector $x\in X$ is \emph{distributionally unbounded} if there exists a subset $B$ of $\mathbb{N}$ with $\udens(B)=1$ and $\lim_{B\ni n\to\infty} \Vert T^nx \Vert =\infty$.
The following result follows from \cite{JL2022}*{Proposition 5.33}. 

\begin{prop}\label{prop:D-densitive}
Let $T$ be a bounded linear operator on a Banach space $X$.
If  $T$ is distributionally sensitive, then generic vectors in $X$ are distributionally unbounded.
\end{prop}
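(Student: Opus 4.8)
The plan is to deduce Proposition~\ref{prop:D-densitive} from the cited result \cite{JL2022}*{Proposition 5.33} by unwinding what distributional sensitivity gives us in the linear setting and using the linear structure to trade ``for every $x$ there is a nearby $y$'' for ``generically $x$ itself works.'' First I would fix the constant $\delta>0$ witnessing distributional sensitivity, and apply the definition with $x=\mathbf{0}$: for every $\varepsilon>0$ there is a vector $y$ with $\Vert y\Vert<\varepsilon$ such that $\udens(\{n:\Vert T^ny\Vert>\delta\})=1$. By rescaling (replacing $y$ with $ry$ for suitable $r$, which only shrinks the set of good $n$'s by the reverse inequality but actually preserves upper density one after re-choosing $\delta$, or more cleanly by iterating $T^k$) one extracts, for each $m\in\mathbb{N}$, a vector $y_m$ with $\Vert y_m\Vert<2^{-m}$ and $\udens(\{n:\Vert T^ny_m\Vert>m\})=1$; this is the ``distributionally unbounded in a quantitative, summable way'' ingredient. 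The key point is that $\udens(\{n:\Vert T^n y_m\Vert > m\})=1$ means that for every $\varepsilon>0$, the set $\{n : \Vert T^n y_m\Vert > m\}$ has lower density $>1-\varepsilon$ along arbitrarily long blocks, which is exactly the hypothesis one feeds into a Baire-category argument.

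Next I would set up the $G_\delta$ set explicitly. For each $m\in\mathbb{N}$ and each $N\in\mathbb{N}$, consider the open set
\[
U_{m,N}=\Bigl\{x\in X:\ \exists\, n\geq N\ \text{with}\ \Vert T^nx\Vert>m\ \text{and}\ \frac{\#(\{k\leq n:\Vert T^kx\Vert>m\})}{n}>1-\tfrac1m\Bigr\};
\]
more precisely, one wants the set of $x$ for which, for each $m$, one can find arbitrarily large windows $[1,n]$ on which $\#\{k\leq n:\Vert T^kx\Vert>m\}/n$ is close to $1$, since that is exactly the statement $\udens(\{n:\Vert T^nx\Vert>m\})=1$. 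Writing this as a countable intersection of open sets, the set $G$ of distributionally unbounded vectors (or rather the slightly stronger set of vectors $x$ with $\udens(\{n:\Vert T^nx\Vert>m\})=1$ for all $m$, which by Lemma~\ref{lem:density}(1) — applied to $a_n=1/\Vert T^nx\Vert$ or directly — is equivalent to $x$ being distributionally unbounded) is $G_\delta$. Density of $G$ then follows from translation: given any $x_0\in X$ and any $\varepsilon>0$, pick $m$ with $2^{-m}<\varepsilon$ and the corresponding $y_m$; then $x_0+y_m$ is within $\varepsilon$ of $x_0$, and since $\Vert T^n(x_0+y_m)\Vert\geq\Vert T^ny_m\Vert-\Vert T^nx_0\Vert$... here is where a little care is needed, because $\Vert T^nx_0\Vert$ need not be small. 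The cleanest fix is to first show the set of distributionally unbounded vectors is dense as a consequence of \cite{JL2022}*{Proposition 5.33} directly (that proposition presumably already records a dense-$G_\delta$ conclusion, or a dense-orbit/residuality statement), and only use the excerpt's definitions to restate it; alternatively, one notes that distributional sensitivity applied at a dense set of base points $x_0$ — which is legitimate since the definition quantifies over all $x\in X$ — gives, for each $x_0$ in a countable dense set, a sequence $y_m\to 0$ with $\udens(\{n:\Vert T^n(x_0+y_m)-T^nx_0\Vert>\delta\})=1$, and then the triangle inequality $\Vert T^n(x_0+y_m)\Vert\geq\Vert T^n(x_0+y_m)-T^nx_0\Vert-\Vert T^nx_0\Vert$ combined with the fact that one may instead pair $x_0$ against a hypercyclic-type estimate is not quite enough; the honest route is to invoke that \cite{JL2022}*{Proposition 5.33} states precisely that distributional sensitivity of a linear operator implies the residuality of distributionally unbounded vectors, so the ``proof'' here is a one-line citation plus a remark matching terminology.

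Given the hint in the text — ``The following result follows from \cite{JL2022}*{Proposition 5.33}'' — I expect the intended proof is short: first observe that for a bounded linear operator, distributional sensitivity is equivalent (by linearity, translating the witnessing pairs $x,y$ to $\mathbf 0, y-x$) to the existence of $\delta>0$ such that the set $\{z\in X:\udens(\{n:\Vert T^nz\Vert>\delta\})=1\}$ is dense in a neighborhood of $\mathbf 0$, hence dense in $X$ by homogeneity; then quote \cite{JL2022}*{Proposition 5.33}, which upgrades this to residuality and identifies the residual set with the distributionally unbounded vectors once one notes that $\udens(\{n:\Vert T^nz\Vert>\delta\})=1$ for one $\delta>0$ self-improves, via the linear scaling $z\mapsto 2^jz$ and a diagonal argument over a residual set, to $\udens(\{n:\Vert T^nz\Vert>M\})=1$ for every $M$, i.e. $\lim_{B\ni n\to\infty}\Vert T^nz\Vert=\infty$ along a set $B$ of upper density one. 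The main obstacle, and the step I would spend the most care on, is precisely this self-improvement from a single threshold $\delta$ to all thresholds $M$ while keeping upper density equal to one: upper density is not countably stable under intersection, so one cannot naively intersect the sets $\{n:\Vert T^nz\Vert>M\}$ over $M$; the correct device is to pass to a residual subset where a suitable diagonal threshold works, exactly as encoded in the Baire-category machinery underlying \cite{JL2022}*{Proposition 5.33}, which is why the statement is phrased for \emph{generic} vectors rather than all vectors. So in the write-up I would (i) record the linearity reduction, (ii) cite \cite{JL2022}*{Proposition 5.33} for the residuality, and (iii) give the short diagonal argument that a generic $z$ in that residual set satisfies $\Vert T^nz\Vert\to\infty$ along a density-one-upper set, thereby landing in the class of distributionally unbounded vectors.
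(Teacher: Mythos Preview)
The paper gives no proof of this proposition at all: the sentence preceding it says ``The following result follows from \cite{JL2022}*{Proposition~5.33},'' and that citation is the entire argument. Your proposal eventually lands on exactly this---``the `proof' here is a one-line citation plus a remark matching terminology''---so in that sense you match the paper.

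That said, the surrounding scaffolding in your write-up is both unnecessary and, in places, shakier than it needs to be. Your first attempt (perturb an arbitrary $x_0$ by a small $y_m$ and use $\Vert T^n(x_0+y_m)\Vert\geq\Vert T^ny_m\Vert-\Vert T^nx_0\Vert$) fails for the reason you yourself flag, and you should simply drop it rather than leave it half-repaired. Your step~(iii) ``self-improvement via a diagonal argument'' is also more than is required: once you know that $S_\delta=\{z:\udens(\{n:\Vert T^nz\Vert>\delta\})=1\}$ is residual, the scaling identity $S_M=(M/\delta)\cdot S_\delta$ shows each $S_M$ is residual (scaling is a homeomorphism), and the set of distributionally unbounded vectors is $\bigcap_{M\in\mathbb{N}} S_M$, a countable intersection of residual sets---no diagonalisation needed. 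Finally, your remark that one obtains $y_m$ with $\Vert y_m\Vert<2^{-m}$ and $\udens(\{n:\Vert T^ny_m\Vert>m\})=1$ ``by rescaling\ldots or more cleanly by iterating $T^k$'' is muddled; the clean way is to apply distributional sensitivity at $x=\mathbf{0}$ with $\varepsilon=\delta\,2^{-m}/m$ to get $y$, then set $y_m=(m/\delta)y$. None of this is in the paper, which simply cites and moves on.
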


The following result is essentially contained in \cite{BBMP2013}*{Proposition 8}, see \cite{JL2022}*{Theorem 5.41} for the version here, which establishes a characterization of mean-L-unstability.

\begin{thm}\label{thm:mean-L-unstable}
Let $T$ be a bounded linear operator on a Banach space $X$.
Then $T$ is mean-L-unstable if and only if it is distributionally sensitive.
\end{thm}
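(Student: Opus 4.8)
The plan is to prove the two implications separately, with essentially all of the work in ``mean-L-unstable $\Rightarrow$ distributionally sensitive''. For the easy implication, suppose $T$ is distributionally sensitive with constant $\delta_0$, which we may assume satisfies $\delta_0\le 1$. Given any nonempty open set $U\subseteq X$, I would pick $x\in U$ and $\varepsilon>0$ with the open ball $B(x,\varepsilon)\subseteq U$, and apply distributional sensitivity (at this $x$ and $\varepsilon$) to obtain $y$ with $\Vert x-y\Vert<\varepsilon$ (hence $y\in U$) and $\udens(\{n\in\mathbb{N}\colon\Vert T^nx-T^ny\Vert>\delta_0\})=1\ge\delta_0$. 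Thus $x,y\in U$ witness mean-L-unstability with the same constant $\delta_0$, so $T$ is mean-L-unstable.

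For the converse the plan is to produce, by a Baire category argument, a dense $G_\delta$ set of distributionally unbounded vectors, and then invoke linearity. Two reductions first. By part~(1) of Lemma~\ref{lem:density}, a vector $z$ is distributionally unbounded if and only if $\udens(\{n\in\mathbb{N}\colon\Vert T^nz\Vert>N\})=1$ for every $N\in\mathbb{N}$. Moreover the set of distributionally unbounded vectors is invariant under multiplication by nonzero scalars, and $T^nx-T^ny=T^n(x-y)$; hence a single nonzero distributionally unbounded vector $z_0$ already suffices, since for arbitrary $x$ and $\varepsilon>0$ the vector $y=x+\tfrac{\varepsilon}{2\Vert z_0\Vert}z_0$ satisfies $\Vert x-y\Vert<\varepsilon$ and $\udens(\{n\in\mathbb{N}\colon\Vert T^nx-T^ny\Vert>1\})=1$, giving distributional sensitivity with constant $1$. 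So it is enough to show: if $T$ is mean-L-unstable, then $T$ has a distributionally unbounded vector.

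Assume $T$ is mean-L-unstable with constant $\delta>0$. The first thing I would record is a ``density-$\delta/2$'' fact: applying the definition of mean-L-unstability to an arbitrary ball $B(x,\rho)$ produces $x',y'\in B(x,\rho)$ with $\udens(\{n\in\mathbb{N}\colon\Vert T^n(x'-y')\Vert>\delta\})\ge\delta$; on that set $\max\{\Vert T^nx'\Vert,\Vert T^ny'\Vert\}>\delta/2$, and since $\udens$ is finitely subadditive one of $x',y'$ — call it $z$ — lies in $B(x,\rho)$ and satisfies $\udens(\{n\in\mathbb{N}\colon\Vert T^nz\Vert>\delta/2\})\ge\delta/2$; after rescaling $z$ the threshold $\delta/2$ may be replaced by any prescribed number (at the cost of enlarging the norm, which is harmless since the ball was arbitrary). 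Next I would set up the Baire scheme: for $N,k\in\mathbb{N}$ put
\[
A(N,k)=\Bigl\{z\in X:\ \exists\, m\ge k\ \text{with}\ \#\{1\le n\le m:\Vert T^nz\Vert>N\}>\bigl(1-\tfrac1N\bigr)m\Bigr\}.
\]
Each $A(N,k)$ is open, being a union over $m\ge k$ of sets cut out by finitely many strict inequalities $\Vert T^nz\Vert>N$; and $\bigcap_{N}\bigcap_{k}A(N,k)$ is exactly the set of $z$ with $\udens(\{n\in\mathbb{N}\colon\Vert T^nz\Vert>N\})\ge1-\tfrac1N$ for all $N$, which by the first reduction is the set of distributionally unbounded vectors. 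By Baire's theorem it then remains to show that each $A(N,k)$ is dense in $X$.

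This last density statement is the crux, and the step I expect to be the main obstacle: one must bootstrap from ``every ball contains a vector whose large-value set, at a fixed threshold, has upper density $\ge\delta/2$'' to ``every ball contains a vector whose large-value set, at threshold $N$, occupies more than a $(1-\tfrac1N)$-fraction of some window $[1,m]$ with $m\ge k$''. The natural route is an iterated perturbation: begin at a point of the given ball and successively add tiny vectors supplied by the previous paragraph, each chosen so that — along a sufficiently long window — the proportion of coordinates $n$ at which $\Vert T^n(\cdot)\Vert$ is large strictly increases, so that after $O(\tfrac1\delta\log N)$ steps the proportion of coordinates that are still small, on an appropriate window, drops below $\tfrac1N$. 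The delicate point is cancellation: a later, norm-tiny perturbation need not be small under $T^n$ at the large times $n$ relevant to the earlier perturbations, and could undo progress already made; the standard remedy is to interleave the choices (take each new perturbation small in norm relative to the finitely many numbers $\Vert T^n\Vert$ on the window fixed so far, then pick the next window out along the $\limsup$ witnessing that perturbation's density), together with a diagonalization. These are precisely the estimates of \cite{BBMP2013}*{Proposition 8} and \cite{JL2022}*{Theorem 5.41}; granting them, every $A(N,k)$ is dense, so $\bigcap_{N,k}A(N,k)$ is a dense $G_\delta$ consisting of distributionally unbounded vectors (which also recovers Proposition~\ref{prop:D-densitive} in this case), and by the reductions above $T$ is distributionally sensitive.
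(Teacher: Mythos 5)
The paper offers no proof of this theorem: it is imported from \cite{JL2022}*{Theorem 5.41} (which in turn rests on \cite{BBMP2013}*{Proposition 8}), so there is no in-paper argument to measure your write-up against. Your easy direction is correct, and your reductions for the converse are sound: mean-L-unstability applied to balls does yield, via subadditivity of $\udens$ and rescaling, arbitrarily small vectors $w$ with $\udens(\{n\colon \Vert T^nw\Vert>M\})\geq\delta/2$ for any prescribed $M$; a single distributionally unbounded vector does give distributional sensitivity with constant $1$ by linearity; and the sets $A(N,k)$ are open with $\bigcap_{N,k}A(N,k)$ equal to the set of distributionally unbounded vectors.

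However, the density of each $A(N,k)$ --- which you correctly identify as the crux --- is not a routine consequence of the interleaving you sketch, and deferring it to \cite{BBMP2013}*{Proposition 8} means your argument reduces the theorem to (essentially) the cited theorem itself. The difficulty is quantitative, not merely a matter of controlling cancellation: each new perturbation $w_j$ is large only on a set $D_j$ with $\udens(D_j)\geq\delta/2$, and any window $[1,m]$ long enough for $D_j$ to occupy a $\delta/2$-fraction of it is necessarily so long that the large-time sets already secured for $w_1,\dots,w_{j-1}$ (which live in $[1,m_{j-1}]$ with $m_{j-1}\ll m$) occupy a vanishing fraction of it. So the naive iteration stalls at proportion about $\delta/2$ rather than climbing to $1-\tfrac{1}{N}$; the amplification from positive upper density to upper density one is precisely the nontrivial content of the cited results (it is the linear phenomenon behind the equivalence of distributional chaos of types $1$ and $2$ for operators) and requires a different idea than ``each step strictly increases the proportion.'' If citing \cite{BBMP2013} and \cite{JL2022} is acceptable --- as it evidently is to the authors --- your proposal is a correct skeleton; as a self-contained proof it has a genuine gap at exactly this point.
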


Now we can give the following dichotomy for hypercyclic operators.
\begin{prop}
Let $T$ be a hypercyclic operator on a separable Banach space $X$.
Then either every vector in $X$ is asymptotic to zero with density one, or generic vectors in $X$ are distributionally unbounded.
\end{prop}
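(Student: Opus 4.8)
The plan is to chain together the results already assembled in this section, using the mean-L-stable/mean-L-unstable dichotomy as the branching point. Concretely, I would begin by invoking Theorem~\ref{thm:meanLstableunstable}, which tells us that $T$ is either mean-L-stable or mean-L-unstable, and then treat these two alternatives in turn.

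In the first case, suppose $T$ is mean-L-stable. Since $T$ is by hypothesis a hypercyclic operator on the separable Banach space $X$, Proposition~\ref{prop:mean-L-stable} applies directly and gives that every vector in $X$ is asymptotic to zero with density one, which is the first alternative in the conclusion. In the second case, suppose $T$ is mean-L-unstable. Then Theorem~\ref{thm:mean-L-unstable} yields that $T$ is distributionally sensitive, and feeding this into Proposition~\ref{prop:D-densitive} gives that generic vectors in $X$ are distributionally unbounded, which is the second alternative. Since one of the two cases must occur, the dichotomy follows.

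There is essentially no obstacle here: the statement is a formal consequence of Theorems~\ref{thm:meanLstableunstable} and~\ref{thm:mean-L-unstable} together with Propositions~\ref{prop:mean-L-stable} and~\ref{prop:D-densitive}, all established (or cited) above. The only point worth a remark is that the two alternatives are in fact mutually exclusive — if $x$ is asymptotic to zero with density one then $\udens(\{n\colon\Vert T^nx\Vert\ge 1\})=0$, so $x$ cannot be distributionally unbounded — but since the assertion only claims that at least one alternative holds, this observation is not needed for the proof and would be recorded separately (it will matter later when upgrading this dichotomy toward the four-way classification of Theorem~\ref{thm:main-result}).
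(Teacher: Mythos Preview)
Your proposal is correct and follows essentially the same route as the paper: invoke the mean-L-stable/mean-L-unstable dichotomy of Theorem~\ref{thm:meanLstableunstable}, then apply Proposition~\ref{prop:mean-L-stable} in the stable case and the chain Theorem~\ref{thm:mean-L-unstable} $\to$ Proposition~\ref{prop:D-densitive} in the unstable case. Your added remark on mutual exclusivity is correct but, as you note, not required here.
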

\begin{proof}
    By Theorem~\ref{thm:meanLstableunstable} we know that $T$ is either mean-L-stable or mean-L-unstable. 
    On the one hand, if $T$ is mean-L-stable, note that $T$ is hypercyclic, by Proposition~\ref{prop:mean-L-stable} we know that every vector in $X$ is asymptotic to zero with density one.
    On the other hand, if $T$ is mean-L-unstable, by Theorem~\ref{thm:mean-L-unstable} we know that $T$ is distributionally sensitive. Then by Proposition \ref{prop:D-densitive} we obtain that generic vectors in $X$ are distributionally unbounded.
\end{proof}

Let $T$ be a hypercyclic operator on a Banach space $X$. 
For the proof of our main result (Theorem \ref{thm:main-result}), we also need a parameter $c(T) \in [0, 1]$ associated with $T$, which was firstly introduced by Grivaux and Matheron in \cite{GM2014}.
For each $R>0$, define 
\[
c_R(T) =\sup_{x\in HC(T)} \udens(\{n\in\mathbb{N}\colon
\Vert T^nx\Vert<R\}),
\]
and  
\[
c(T)=\sup_{R>0} c_R(T).
\]

\begin{rem}\label{rem:c-T-generic}
It is shown in \cite{GM2014}*{Remark 4.6}
that there exists a dense $G_\delta$ subset $X_0$ of $HC(T)$ such that for any $x\in X_0$ and $R>0$, 
$\udens(\{n\in\mathbb{N}\colon
\Vert T^nx\Vert<R\})=c(T)$.
Since $HC(T)$ is a  dense $G_\delta$ subset of $X$,
$X_0$ is also a dense $G_\delta$ subset $X$.
\end{rem}  

Concerned with $c(T)$, we have the following proposition, which is an important tool in the proof of Theorem \ref{thm:main-result}.
\begin{prop}\label{prop:c-T-eq}
Let $T$ be a hypercyclic operator on a separable Banach space $X$.
Then 
\[
c(T)=\sup\{ \lambda\in[0,1]\colon 
\exists x\in HC(T)\, \&\, R>0 \text{ s.t. }
\udens(\{n\in\mathbb{N}\colon
\Vert T^nx\Vert<R\})\geq  \lambda\}
\]
and for every hypercyclic vector $y\in HC(T)$ and $R>0$, 
\[
  \ldens(\{n\in\mathbb{N}\colon
\Vert T^ny\Vert\geq R\})\geq 1-c(T).  
\]
\end{prop}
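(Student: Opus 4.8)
The plan is to reduce both assertions to the bookkeeping of the suprema appearing in the definition of $c(T)$, together with one elementary fact about densities: for any $A\subseteq\mathbb{N}$,
\[
\ldens(\mathbb{N}\setminus A)=1-\udens(A),
\]
which is immediate from $\#\bigl((\mathbb{N}\setminus A)\cap[1,n]\bigr)=n-\#\bigl(A\cap[1,n]\bigr)$.

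For the first identity, denote by $\Lambda$ the set on the right-hand side; since $\udens\geq 0$ always and $HC(T)\neq\varnothing$, we have $0\in\Lambda$, so $\Lambda\neq\varnothing$. On the one hand, for every $x\in HC(T)$ and $R>0$ the value $\mu_{x,R}:=\udens(\{n\in\mathbb{N}\colon\Vert T^nx\Vert<R\})$ lies in $\Lambda$ (take $\lambda=\mu_{x,R}$), hence $\sup\Lambda\geq\mu_{x,R}$; taking the supremum over $x\in HC(T)$ and then over $R>0$ gives $\sup\Lambda\geq c(T)$. On the other hand, if $\lambda\in\Lambda$ is witnessed by some $x\in HC(T)$ and $R>0$, then $\lambda\leq\udens(\{n\in\mathbb{N}\colon\Vert T^nx\Vert<R\})\leq c_R(T)\leq c(T)$, so $\sup\Lambda\leq c(T)$. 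The same computation also shows that $\Lambda$ is a downward-closed subset of $[0,1]$, i.e.\ an interval with endpoints $0$ and $c(T)$, which is consistent with $\sup\Lambda=c(T)$.

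For the second assertion, fix $y\in HC(T)$ and $R>0$, and apply the complementation identity to $A=\{n\in\mathbb{N}\colon\Vert T^ny\Vert<R\}$:
\[
\ldens(\{n\in\mathbb{N}\colon\Vert T^ny\Vert\geq R\})=1-\udens(\{n\in\mathbb{N}\colon\Vert T^ny\Vert<R\})\geq 1-c_R(T)\geq 1-c(T),
\]
where the first inequality holds because $y\in HC(T)$ forces $\udens(\{n\in\mathbb{N}\colon\Vert T^ny\Vert<R\})\leq c_R(T)$ by the definition of $c_R(T)$, and the second uses $c_R(T)\leq c(T)$.

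There is no genuine obstacle here: the proposition is, in effect, a reformulation of the definition of $c(T)$. The only point requiring a little care is tracking which index each supremum ranges over—the vector $x\in HC(T)$ versus the radius $R>0$—and recording that, for a fixed radius $R$, the quantity $\udens(\{n\colon\Vert T^ny\Vert<R\})$ is already bounded by $c_R(T)$ and hence by $c(T)$; once this is noted, both claims are purely formal.
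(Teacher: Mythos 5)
Your proof is correct and follows essentially the same route as the paper: both directions of the first identity are the same bookkeeping with $c_R(T)\leq c(T)$, and the second assertion is obtained exactly as in the paper by complementing $\{n\colon\Vert T^ny\Vert<R\}$ and using $\udens(\{n\colon\Vert T^ny\Vert<R\})\leq c(T)$. The only cosmetic difference is that you record the complementation as an exact equality $\ldens(\mathbb{N}\setminus A)=1-\udens(A)$, where the paper only invokes the inequality it needs.
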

\begin{proof}
Let 
\[
    A= \{ \lambda\in[0,1]\colon  \exists x\in HC(T) \, \&\, R>0 \text{ s.t. } \udens(\{n\in\mathbb{N}\colon \Vert T^nx\Vert<R\})\geq  \lambda\}.
\]
For any $\lambda\in A$, 
there exist $x\in HC(T)$ and $R>0$ such that
$\udens(\{n\in\mathbb{N}\colon \Vert T^nx\Vert<R\})\geq  \lambda$.
Let $\lambda_0=\udens(\{n\in\mathbb{N}\colon \Vert T^nx\Vert<R\})$.
It is clear that $\lambda\leq \lambda_0$ and $\lambda_0\in A$.
By the definition of $c_R(T)$ and $c(T)$, one has $\lambda_0\leq c_R(T)\leq c(T)$. 
Then $c(T)$ is a upper bound for $A$.
For every $\varepsilon>0$, there exists $R_0>0$ such that 
$c_{R_0}(T)>c(T)-\varepsilon$ and then
there exists $x\in HC(T)$ such that $\udens(\{n\in\mathbb{N}\colon
\Vert T^nx\Vert<R\})\geq c(T)-\varepsilon$.
So $\max\{ c(T)-\varepsilon, 0\}\in A$.
This implies that $c(T)=\sup A$.

Fix $y\in HC(T)$ and $R>0$. One has 
$\udens(\{n\in\mathbb{N}\colon
\Vert T^ny\Vert<R\})\in A$ and then
\[ 
\udens(\{n\in\mathbb{N}\colon \Vert T^ny\Vert<R\})\leq c(T).
\]
Therefore,
\begin{align*}
     \ldens(\{n\in\mathbb{N}\colon \Vert T^ny\Vert\geq R\})&\geq  1- \udens(\{n\in\mathbb{N}\colon \Vert T^ny\Vert<R\})\\
     &\geq 1-c(T). \qedhere 
\end{align*}

\end{proof} 
 
Now we are ready to prove Theorem~\ref{thm:main-result}.

\begin{proof}[Proof of Theorem~\ref{thm:main-result}]
If $T$ is mean-L-stable, then by Proposition~\ref{prop:mean-L-stable} every vector in $X$ is asymptotic to zero with  density one.
So, in this case, assertion (1) holds.

If $T$ is mean-L-unstable and $c(T)=1$, then combining Proposition~\ref{prop:D-densitive} and Theorem~\ref{thm:mean-L-unstable} there exists a dense $G_\delta$ subset $X_1$ of $X$ such that every vector in $X_1$ is distributionally unbounded,
and by Remark~\ref{rem:c-T-generic}, there exists a dense $G_\delta$ subset $X_2$ of $X$ such that for every $x\in X_2$ and $R>0$, 
$\udens(\{n\in\mathbb{N}\colon \Vert T^nx\Vert<R\})=1$.
Let $X_0=X_1\cap X_2$. Then $X_0$ is a dense $G_\delta$ subset of $X$ and every vector in $X_0$ is distributionally irregular of type $1$. So, in this case, assertion (2) holds.

If $T$ is mean-L-unstable and $0<c(T)<1$, then there exists a dense $G_\delta$ subset $X_1$ of $X$ such that every vector in $X_1$ is distributionally unbounded,
and by Remark~\ref{rem:c-T-generic}, there exists a dense $G_\delta$ subset $X_2$ of $X$ such that for every $x\in X_2$ and $R>0$, 
$\udens(\{n\in\mathbb{N}\colon \Vert T^nx\Vert<R\})=c(T)>0$.
Let $X_0=X_1\cap X_2$. 
Then $X_0$ is a dense $G_\delta$ subset of $X$ and every vector in $X_0$ is distributionally irregular of type $2\frac{1}{2}$. 
By Proposition~\ref{prop:c-T-eq}, for every hypercyclic vector $y\in HC(T)$ and $R>0$, 
\[
  \ldens(\{n\in\mathbb{N}\colon \Vert T^ny\Vert\geq R\})\geq 1-c(T)>0.  
\]
So every hypercyclic vector in $X$ is not distributionally irregular of type $1$. 
So, in this case, assertion (3) holds.

If $T$ is mean-L-unstable and $c(T)=0$,
then by Proposition~\ref{prop:c-T-eq}, for every hypercyclic vector $y\in HC(T)$ and $R>0$, 
\[
  \ldens(\{n\in\mathbb{N}\colon \Vert T^ny\Vert\geq R\})\geq 1-c(T)=1.  
\]
So every hypercyclic vector in $X$ is divergent to infinity with density one. So, in this case, assertion (4) holds.
\end{proof}

\begin{rem}
By the proof of Theorem~\ref{thm:main-result}, we can replace the case (3) in Theorem~\ref{thm:main-result} by
\begin{enumerate}
    \item[(3$^\prime$)] there exists a dense $G_\delta$ subset $X_0$ of $ X$ and a constant $c\in(0,1)$ such that for every $x\in X_0$, there exist two subsets $A$ and $B$ of $\mathbb{N}$ with $\udens(A)=c$ and $\udens(B)=1$ such that $\lim\limits_{A\ni n\to\infty} \Vert T^n x\Vert =0$ and $\lim\limits_{B\ni n\to\infty} \Vert T^n x\Vert =\infty$, and for any hypercyclic vector $y\in HC(T)$  there exists  a subset $C$ of $\mathbb{N}$ with $\ldens (C)=1-c$ such that 
 $\lim\limits_{C\ni n\to\infty} \Vert T^n y\Vert =\infty$.
\end{enumerate}
\end{rem}

\section{Some examples}
In this section, we will present some examples concerned with weighted backward shifts on $\ell^p$ to show that all four cases in Theorem~\ref{thm:main-result} can occur. 

For a positive weight sequence $v=(v_j)_{j\geq 1}$ and $1\leq p<\infty$,
let
\[
    \ell^p(v,\mathbb{N})=\biggl\{(x_j)_{j\geq 1} \colon  \sum_{j=1}^\infty |x_j|^p v_j<\infty\biggr\}
\]
be the weighted $\ell^p$-space.
The norm on $\ell^p(v,\mathbb{N})$ is defined as follows:
\[
    \Vert x\Vert = \biggl(\sum_{j=1}^\infty |x_j|^p v_j\biggr)^{\frac{1}{p}},\quad \forall x=(x_j)_{j\geq 1}\in \ell^p(v,\mathbb{N}).
\]
The \emph{unilateral backward shift} $B$ on $\ell^p(v,\mathbb{N})$ is defined as 
\[
   (Bx)_j=x_{j+1},\quad \forall x=(x_j)_{j\geq 1}\in \ell^p(v,\mathbb{N}),\ j\geq 1.  
\]
It is not hard to show that the map $B\colon \ell^p(v,\mathbb{N})\to \ell^p(v,\mathbb{N})$ is well-defined (and, equivalently, continuous) if and only if $\sup_{j\geq 1}\frac{v_j}{v_{j+1}}<\infty$.

\begin{exam}\label{exam:case-1}
By \cite{MOP2013}*{Theorem 2.1},
there exists a positive weight sequence $v$ such that the operator 
$B\colon \ell^p(v,\mathbb{N})\to \ell^p(v,\mathbb{N})$ is mixing, $1\leq p<\infty$, and satisfies the case (1) of Theorem~\ref{thm:main-result}.
\end{exam}

\begin{rem}
By \cite{BBMP2011}*{Theorem 35},
in every infinite dimensional separable Banach space there exists a hypercyclic and distributionally chaotic operator which admits a dense distributionally irregular manifold,
which satisfies the case (2) of Theorem~\ref{thm:main-result}.
\end{rem}

Now we consider the bilateral case. 
For a positive weight sequence $v=(v_j)_{j\in\mathbb{Z}}$ and $1\leq p<\infty$,
let
\[
    \ell^p(v,\mathbb{Z})=\biggl\{(x_j)_{j\in\mathbb{Z}} \colon  \sum_{j\in \mathbb{Z}} |x_j|^p v_j<\infty\biggr\},
\]
be the weighted $\ell^p$-space.
The norm on $\ell^p(v,\mathbb{Z})$ is defined as follows:
\[
    \Vert x\Vert = \biggl(\sum_{j\in\mathbb{Z}} |x_j|^p v_j\biggr)^{\frac{1}{p}},\quad \forall x=(x_j)_{j\in\mathbb{Z}}\in \ell^p(v,\mathbb{Z}).
\]
The \emph{bilateral backward shift} $B$ on $\ell^p(v,\mathbb{Z})$ is defined as 
\[
   (Bx)_j=x_{j+1},\quad \forall x=(x_j)_{j\in\mathbb{Z}}\in \ell^p(v,\mathbb{Z}),\ j\in\mathbb{Z}.  
\]
Similarly, the map $B\colon \ell^p(v,\mathbb{Z})\to \ell^p(v,\mathbb{Z})$ is well-defined (and, equivalently, continuous) if and only if $\sup_{j\in\mathbb{Z}}\frac{v_j}{v_{j+1}}<\infty$.

\begin{rem}
By \cite{MOP2013}*{Example 3.5},
there exists a positive weight sequence $v$ such that the operator 
$B\colon \ell^p(v,\mathbb{Z})\to \ell^p(v,\mathbb{Z})$ is hypercyclic, $1\leq p<\infty$, and every non-zero vector in $X$ is distributionally irregular of type $1$. Note that this is a special situation satisfying case (2) in Theorem~\ref{thm:main-result}.
\end{rem}

\begin{rem}
Bayart and Ruzsa in \cite{BR2015}*{Section 6} constructed 
a frequently hypercyclic weighted shift on $c_0(\mathbb{Z})$ with $0<c(T)<1$, see also the proof of Theorem 1.7 in \cite{GM2014}. 
This meets the case (3) in Theorem~\ref{thm:main-result}.
\end{rem}

In the following example, we will construct a hypercyclic operator $T$ on the  weighted $\ell^p$-space
such that $c(T)\in (0,1)$ and for every non-zero vector $x\in X$ there exists $\tau_x>0$ such that $\ldens(\{j\geq 0\colon \Vert T^j x\Vert \geq \tau_x\})\geq 3/7$.
\begin{exam}\label{exam:case3}
Let $a_k=8^k$ for each $k\in\mathbb{N}$.
Define a positive weight sequence $v=(v_j)_{j\in\mathbb{Z}}$ as follows:
\[
v_j= \begin{cases}
\frac{1}{j}, & j\in\mathbb{N},\\
1,            & 0\leq -j\leq a_1,\\ 
\frac{1}{2^{a_k+j}}   & a_k< -j \leq a_k+k,\\
\frac{1}{2^k}, & a_k+k < -j \leq 2 a_k-k,\\
\frac{1}{2^{2a_k+j}}   & 2a_k-k< -j \leq 2a_k,\\
1 & 2a_k<-j\leq a_{k+1}.
\end{cases}
\]
Consider the  weighted $\ell^p$-space $\ell^p(v,\mathbb{Z})$ and the bilateral backward shift $B$ on $\ell^p(v,\mathbb{Z})$.
As $\sup_{j\in\mathbb{Z}}\frac{v_j}{v_{j+1}}\leq 2$, $B$ is well-defined.
By~\cite{GP2011}*{Theorem 4.12}, the bilateral backward shift $B$ is hypercyclic.

Consider the vector $e_0$ in $\ell^p(v,\mathbb{Z})$,
i.e., $e_0(0)=1$ and $e_0(i)=0$ for all $i\in\mathbb{Z}\setminus\{0\}$. 
For any $n\in\mathbb{N}$, $\Vert B^n e_0\Vert =v_{-n}$.
Note that 
\[
\{n\in\mathbb{N}\colon v_{-n}=1\} \supset \bigcup_{k=1}^\infty [2a_k,a_{k+1}] 
\]
and
\[
\{n\in\mathbb{N}\colon v_{-n}\leq \tfrac{1}{2^m}\}  \supset \bigcup_{k=m}^\infty [a_k+k,2a_k-k].
\]
So 
\[
\ldens(\{n\in\mathbb{N}\colon \Vert B^n e_0\Vert\geq 1\})\geq \tfrac{3}{7},
\]
and for every $\varepsilon>0$
\[
    \udens(\{n\in\mathbb{N}\colon \Vert B^n e_0\Vert\leq \varepsilon\})\geq \tfrac{4}{7}.
\]
For every non-zero $x\in \ell^p(v,\mathbb{Z})$,
there exists some $k\in\mathbb{Z}$ such that $(B^k x)(0)>0$,
then there exists $\tau_x>0$ such that
\[
    \ldens(\{n\in\mathbb{N}\colon \Vert B^n x\Vert\geq \tau_x\})\geq \tfrac{3}{7}.
\]
Note that the collection 
\[
    \{x\in X\colon \udens(\{n\in\mathbb{N}\colon \Vert B^n x\Vert\leq \varepsilon\})\geq \tfrac{4}{7}, \forall \varepsilon>0\}
\]
is a dense $G_\delta$ subset of $X$,
as it contains all the vectors with only finite non-zero coordinates.
\end{exam}

\begin{rem}
Menet in \cite{M2017} constructed a Devaney chaotic operator on $\ell^p$ such that for any non-zero vector $x$, there exists $\tau>0$ such that $\ldens(\{j\geq 0\colon \Vert T^j x\Vert \geq \tau\})=1$.
Then $c(T)=0$. 
This satisfies case (4) in Theorem~\ref{thm:main-result}.
\end{rem}

The hypercyclic operator constructed in the following example is adapted from that in \cite{L2024}, which has the property that every non-zero vector goes to infinity with density one. 

\begin{exam}\label{exam:case4}
Let $a_k=8^k$ for each $k\in\mathbb{N}$.
Define a weight sequence $v=(v_j)_{j\in\mathbb{Z}}$ as follows:
\[
v_j= \begin{cases}
\frac{1}{j}, & j\in\mathbb{N},\\
1,            & 0\leq -j\leq a_1,\\ 
2^{k -j-a_k}   & a_k< -j \leq a_k+2k,\\
2^{-2k -j-a_k}  & a_k+2k< -j \leq a_k+3k+1,\\
2^{k+1} & a_k+3k+1 <-j\leq a_{k+1}.
\end{cases}
\]
Consider the  weighted $\ell^p$-space $\ell^p(v,\mathbb{Z})$ and the bilateral backward shift $B$ on $\ell^p(v,\mathbb{Z})$.
As $\sup_{j\in\mathbb{Z}}\frac{v_j}{v_{j+1}}\leq 2$, $B$ is well-defined.
By~\cite{GP2011}*{Theorem 4.12}, the bilateral backward shift  $B$ is hypercyclic.

Consider the vector $e_0$ in $\ell^p(v,\mathbb{Z})$,
i.e., $e_0(0)=1$ and $e_0(i)=0$ for all $i\in\mathbb{Z}\setminus\{0\}$. 
For any $n\in\mathbb{N}$, $\Vert B^n e_0\Vert =v_{-n}$.
Note that for every $m\in\mathbb{N}$
\[
\{n\in\mathbb{N}\colon v_{-n}\geq 2^{m+1}\} \supset \bigcup_{k=m}^\infty [a_k+3k+1,a_{k+1}].
\]
So  for every $m\in\mathbb{N}$
\[
    \ldens(\{n\in\mathbb{N}\colon \Vert B^n e_0\Vert\geq  2^{m+1} \})=1.
\]
For every non-zero $x\in \ell^p(v,\mathbb{Z})$,
there exists some $k\in\mathbb{Z}$ such that $(B^k x)(0)>0$,
then for every $R>0$
\[
    \ldens(\{n\in\mathbb{N}\colon \Vert B^n x\Vert\geq R\})=1.
\]
\end{exam}

\section{Results for \texorpdfstring{$C_0$-semigroups}{C0-semigroup}}
The dynamics of $C_0$-semigroups were systematically studied for the first time in \cite{DSW1997}. For further details, see \cite{GP2011}*{Chapter 7}. In this section, we discuss results for $C_0$-semigroups that are analogous to those presented in the previous sections.

We say that a one-parameter family $(T_t)_{t\geq 0}$ of bounded linear operators on a Banach space $X$ 
is a \emph{$C_0$-semigroup} if $T_0=I$, $T_{t+s}=T_tT_s$ for all $t,s\geq 0$ and $\lim_{t\to s}T_t x=T_sx$ for all $x\in X$ and $s\geq 0$.
It is easy to see that such a $C_0$-semigroup is \emph{locally equicontinuous}, that is, for any $s>0$,
\[
\sup_{t\in [0,s]} \Vert  T_t\Vert<\infty.
\]

Let $(T_t)_{t\ge 0}$ be a $C_0$-semigroup on a Banach space $X$ and 
$x\in X$. The \emph{orbit} of $x$ under $(T_t)_{t\ge 0}$ is defined by $orb(x, (T_t)_{t\geq 0})=\{T_tx: t\ge 0\}$.
We say that $(T_t)_{t\ge 0}$ is \emph{hypercyclic} if there is some $x\in X$ whose orbit under $(T_t)_{t\ge 0}$ is dense in $X$. In such case, $x$ is called a \emph{hypercyclic vector} for $(T_t)_{t\ge 0}$.

The following result concerned with the equivalent characterizations of hypercyclicity for $C_0$-semigroups was proved in \cite{CMP2007}, see \cite{GP2011}*{Theorem 7.26} for the version stated here.

\begin{thm} 
Let $(T_t)_{t\geq 0}$ be a $C_0$-semigroup on a Banach space $X$ and $x\in X$. Then the following assertions are equivalent:
\begin{enumerate}
    \item $x$ is hypercyclic for $(T_t)_{t\geq 0}$;
    \item $x$ is hypercyclic for $T_s$ for some $s>0$;
    \item $x$ is hypercyclic for $T_s$ for every $s>0$.
\end{enumerate}
\end{thm}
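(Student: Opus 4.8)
The plan is to treat the three implications separately, disposing of two of them immediately. The implication $(3)\Rightarrow(2)$ is vacuous. For $(2)\Rightarrow(1)$ I would argue as follows: if $x$ is hypercyclic for $T_s$ for some $s>0$, then $\{T_s^nx:n\ge 0\}=\{T_{ns}x:n\ge 0\}$ is dense in $X$, and since this set is contained in $orb(x,(T_t)_{t\ge 0})$, the orbit of $x$ under the semigroup is dense; hence $x$ is hypercyclic for $(T_t)_{t\ge 0}$. The only substantial implication is $(1)\Rightarrow(3)$, which is exactly the theorem of Conejero, M\"uller and Peris cited above (\cite{CMP2007}; see also \cite{GP2011}*{Theorem 7.26}), so ultimately I would invoke that result; what follows is the shape of its proof.

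Fix $s>0$ and a vector $x$ that is hypercyclic for $(T_t)_{t\ge 0}$; the task is to show that $\{T_{ns}x:n\ge 0\}$ is dense in $X$. The strategy is to reduce everything to the single operator $S:=T_s$ and run a Baire category argument in the spirit of Ansari's theorem on powers of a hypercyclic operator. First I would decompose $[0,\infty)=\bigsqcup_{n\ge 0}[ns,(n+1)s)$ and observe, using $T_{t+s}=T_tT_s$, that
\[
orb(x,(T_t)_{t\ge 0})=\bigcup_{n\ge 0}S^n(K),\qquad K:=\{T_rx:0\le r\le s\}.
\]
Local equicontinuity of the semigroup gives $M:=\sup_{0\le r\le s}\|T_r\|<\infty$, and strong continuity makes $K$ a compact connected arc containing $x$; moreover $S^n(K)$ and $S^{n+1}(K)$ share the point $T_{(n+1)s}x$, so the orbit is a dense, connected, $\sigma$-compact set. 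Next I would pick a countable dense subset $\{r_j:j\ge 0\}$ of $[0,s]$ with $r_0=0$ and set $F_j:=\overline{\{T_{ns+r_j}x:n\ge 0\}}$, each a closed $S$-invariant subset of $X$; by strong continuity every orbit point $T_tx$ is a limit of points of the form $T_{ns+r_j}x$, so $\overline{\bigcup_jF_j}=X$. The goal then becomes to show that already $F_0=\overline{\{T_{ns}x:n\ge 0\}}=X$.

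The delicate part — and the step I expect to be the main obstacle — is the passage from $\overline{\bigcup_jF_j}=X$ to $F_0=X$. One wants to extract, by Baire category, an index $j_0$ with $\operatorname{int}(F_{j_0})\ne\emptyset$, then propagate the nonempty interior through the family using the (approximate) ``rotation'' relations coming from $T_r\{T_{ns+r_j}x:n\ge0\}=\{T_{ns+r_j+r}x:n\ge0\}$ together with the $S$-invariance of the $F_j$ and the topological transitivity of $S$ that these relations force, until one reaches $F_0$, and finally upgrade $F_0$ (closed, $S$-invariant, with nonempty interior, orbit dense) to all of $X$. The real difficulty, and the reason this is not a formal corollary of Ansari's theorem, is that in Ansari's setting there are only finitely many residues, so a finite union of closures is closed and Baire applies directly, whereas here there is a \emph{continuum} of phases in $[0,s]$; the finite-union trick must be replaced by a more careful argument that exploits the compactness and connectedness of the arc $K$ and the $\sigma$-compactness of the orbit. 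This careful execution is carried out in \cite{CMP2007} (see \cite{GP2011}*{Theorem 7.26}), which I would invoke to finish. Combining this with the two easy implications gives the three stated equivalences.
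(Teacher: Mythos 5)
Your proposal is correct and ultimately rests on the same foundation as the paper, which states this theorem without proof as a quotation of \cite{CMP2007} (see \cite{GP2011}*{Theorem 7.26}); your treatment of the trivial implications $(3)\Rightarrow(2)$ and $(2)\Rightarrow(1)$ and your honest deferral of $(1)\Rightarrow(3)$ to the cited Conejero--M\"uller--Peris result match what the paper does. The additional sketch of the hard direction is reasonable in spirit, but since both you and the paper invoke the reference for that step, nothing further is required.
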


Let $A\subset[0,\infty)$ be a Lebesgue measurable set.
The \emph{lower and upper densities} of $A$ are defined as 
\[
\lDens(A)=\liminf_{t\to\infty}\frac{\mu(A\cap[0,t])}{t}\text{ and }
\uDens(A)=\limsup_{t\to\infty}\frac{\mu(A\cap[0,t])}{t},
\]
where $\mu$ is the Lebesgue measure on $[0,\infty)$.
If $\lDens(A)=\uDens(A)$, then we say that the \emph{density} of $A$ exists, and denote it by $\Dens(A)$.

The following result on the relationship between ``$\mathrm{Dens}$'' and ``$\mathrm{dens}$'' is essential contained in \cite{ABMP2013}*{Lemma 2.4}, see also \cite{BBPW2018}*{Lemma 3}.

\begin{lem} \label{Dens-dens}
Let $(T_t)_{t\geq 0}$ be a $C_0$-semigroup on a Banach space $X$.
For each $s>0$, let $C_s= \sup_{t\in [0,s]} \Vert  T_t\Vert$.
Then for every $x\in X$ and $\varepsilon>0$, 
\begin{enumerate}
\item $\lDens(\{t\geq 0\colon \Vert T_tx\Vert<\varepsilon\})\leq \ldens(\{j\in\mathbb{N}\colon \Vert (T_s)^j x\Vert <C_s \varepsilon\})$;
\item $\ldens(\{j\in\mathbb{N}\colon \Vert (T_s)^jx\Vert < \varepsilon\})\leq \lDens(\{t\geq 0\colon \Vert T_tx\Vert<C_s\varepsilon\})$;
\item $\uDens(\{t\geq 0\colon \Vert T_tx\Vert<\varepsilon\})\leq \udens(\{j\in\mathbb{N}\colon \Vert (T_s)^jx\Vert  <C_s \varepsilon\})$;
\item $\udens(\{j\in\mathbb{N}\colon \Vert (T_s)^jx\Vert < \varepsilon\})\leq \uDens(\{t\geq 0\colon \Vert T_tx\Vert<C_s\varepsilon\})$.
\end{enumerate}
\end{lem}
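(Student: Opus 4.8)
The plan is to exploit the semigroup law to tie the continuous quantity $\Vert T_tx\Vert$ on a block $[js,(j+1)s)$ to the single discrete value $\Vert (T_s)^jx\Vert$. Writing $t=js+r$ with $j\ge 0$ an integer and $r\in[0,s)$, the relations $r+js=t$ and $(s-r)+t=(j+1)s$ give $T_tx=T_r(T_s)^jx$ and $(T_s)^{j+1}x=T_{s-r}(T_tx)$. Since $r$ and $s-r$ both lie in $[0,s]$, and $C_s=\sup_{t\in[0,s]}\Vert T_t\Vert<\infty$ by local equicontinuity of the $C_0$-semigroup, these yield the two-sided comparison
\[
\Vert T_tx\Vert\le C_s\,\Vert (T_s)^jx\Vert\qquad\text{and}\qquad \Vert (T_s)^{j+1}x\Vert\le C_s\,\Vert T_tx\Vert .
\]
These are the engine of all four assertions; everything else is bookkeeping with densities.

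For (2) and (4): if $\Vert (T_s)^jx\Vert<\varepsilon$, the first comparison shows the whole block $[js,(j+1)s)$ lies in $\{t\ge 0\colon \Vert T_tx\Vert<C_s\varepsilon\}$, so that $\mu(\{t\in[0,ns]\colon \Vert T_tx\Vert<C_s\varepsilon\})\ge s\cdot\#(\{1\le j\le n\colon \Vert (T_s)^jx\Vert<\varepsilon\})$ up to boundedly many terms. Dividing by $ns$, observing that $\mu(\{\cdot\}\cap[0,t])/t$ at a general $t\in[ns,(n+1)s)$ dominates its value at $ns$ up to the factor $n/(n+1)\to 1$, and passing to $\liminf$ for (2) and $\limsup$ for (4) gives the stated inequalities. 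For (1) and (3): if the block $[js,(j+1)s)$ meets $\{t\colon \Vert T_tx\Vert<\varepsilon\}$, the second comparison forces $\Vert (T_s)^{j+1}x\Vert<C_s\varepsilon$, and since distinct blocks produce distinct indices $j+1$ we get $\mu(\{t\in[0,ns]\colon \Vert T_tx\Vert<\varepsilon\})\le s\cdot\#(\{1\le j\le n\colon \Vert (T_s)^jx\Vert<C_s\varepsilon\})$. Dividing by $ns$ and taking $\liminf$ (for (1)) or $\limsup$ (for (3)), and using that $\liminf_{t\to\infty}$ and $\limsup_{t\to\infty}$ of a function are dominated by the corresponding limits along the subsequence $t=ns$, finishes these two.

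The one point needing care — and the closest thing to an obstacle — is this passage between the continuous parameter $t$ and the block-aligned windows $[0,ns]$: one must be attentive to whether a block is indexed by $j$ or by $j+1$ (the first comparison above links $t$ to $j$, the second links $t$ to $j+1$), and one must verify that replacing $[0,t]$ by the nearest block-aligned window, together with the $O(1)$ discrepancies in the counting functions coming from the endpoints, perturbs the ratios only by $o(1)$ after division by $n$. Once this is checked, the four inequalities follow; this is precisely the argument of \cite{ABMP2013}*{Lemma 2.4}.
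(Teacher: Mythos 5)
Your proof is correct, and it is essentially the argument the paper relies on: the paper does not prove this lemma itself but cites Lemma 2.4 of \cite{ABMP2013} (see also Lemma 3 of \cite{BBPW2018}), whose proof is exactly your block decomposition $t=js+r$ with the two comparisons $\Vert T_tx\Vert\leq C_s\Vert (T_s)^jx\Vert$ and $\Vert (T_s)^{j+1}x\Vert\leq C_s\Vert T_tx\Vert$. The only phrasing to tighten is the claim that a $\limsup_{t\to\infty}$ is ``dominated by the corresponding limit along the subsequence $t=ns$'' --- that is false for general functions, but holds here because the numerator $\mu(A\cap[0,t])$ is nondecreasing in $t$, so for $t\in[ns,(n+1)s]$ the ratio is squeezed between the block-aligned ratios up to a factor $\tfrac{n+1}{n}\to 1$, which is exactly the $o(1)$ perturbation you flag at the end.
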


Similar to that for a single operator, we can define mean-L-stability and mean-L-unstability for a $C_0$-semigroup as follows. Let $(T_t)_{t\ge 0}$ be a $C_0$-semigroup on a Banach space $X$.
We say that $X$ is \emph{mean-L-stable} for $(T_t)_{t\ge 0}$ if for every $\epsilon>0$ there exists a $\delta>0$ such that for any $x, y\in X$ with $d(x,y)<\delta$, one has
\[
    \uDens(\{t\ge 0: \Vert T_tx-T_ty\Vert \ge \epsilon \} )<\epsilon,
\]
and \emph{mean-L-unstable} for $(T_t)_{t\ge 0}$ if there exists a $\delta>0$ such that for any non-empty
open subset $U$ of $X$ there exist $x, y\in U$ such that
\[
    \uDens(\{t\ge 0: \Vert T_tx - T_ty\Vert >\delta \}) \ge \delta.
\]

The following proposition gives equivalent characterizations for mean-L-stability (resp. mean-L-unstability).
\begin{prop}\label{prop:C0meanL}
Let $(T_t)_{t\geq 0}$ be a $C_0$-semigroup on a Banach space $X$. Then the following assertions are equivalent:
\begin{enumerate}
    \item \label{(T_t)} the $C_0$-semigroup $(T_t)_{t\geq 0}$ is mean-L-stable (reps. mean-L-unstable);
    \item \label{some T_s} the operator $T_s$  is mean-L-stable (reps. mean-L-unstable) for some $s>0$;
    \item \label{every T_s} the operator $T_s$  is mean-L-stable (reps. mean-L-unstable) for every $s>0$.
\end{enumerate}
\end{prop}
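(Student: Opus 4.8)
The plan is to prove the cyclic chain of implications (1) $\Rightarrow$ (3) $\Rightarrow$ (2) $\Rightarrow$ (1) in both the ``mean-L-stable'' and the ``mean-L-unstable'' readings at once; the implication (3) $\Rightarrow$ (2) is trivial (``every $s$'' certainly gives ``some $s$''), so everything reduces to the two transfer implications (1) $\Rightarrow$ (3) and (2) $\Rightarrow$ (1). The only ingredients needed are Lemma~\ref{Dens-dens} together with local equicontinuity, which guarantees $C_s:=\sup_{t\in[0,s]}\Vert T_t\Vert<\infty$; the whole point of Lemma~\ref{Dens-dens} is precisely that the continuous-time and discrete-time densities of sublevel sets of the orbit of a fixed vector agree up to replacing the threshold by a constant multiple of it. Throughout I would apply everything to the vector $z=x-y$, using $\Vert T_t x-T_t y\Vert=\Vert T_t z\Vert$ and $\Vert (T_s)^j x-(T_s)^j y\Vert=\Vert(T_s)^jz\Vert$.

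For the stable case, (1) $\Rightarrow$ (3): fix $s>0$ and $\eta>0$, put $\varepsilon=\eta/\max\{1,C_s\}$, and use mean-L-stability of $(T_t)_{t\ge 0}$ to obtain $\delta>0$ so that $\Vert z\Vert<\delta$ forces $\uDens(\{t\ge 0\colon\Vert T_tz\Vert\ge\varepsilon\})<\varepsilon$, equivalently $\lDens(\{t\ge 0\colon\Vert T_tz\Vert<\varepsilon\})>1-\varepsilon$. Lemma~\ref{Dens-dens}(1) then gives $\ldens(\{j\colon\Vert(T_s)^jz\Vert<C_s\varepsilon\})>1-\varepsilon$, and since $C_s\varepsilon\le\eta$ and $\varepsilon\le\eta$ this yields $\udens(\{j\colon\Vert(T_s)^jz\Vert\ge\eta\})<\varepsilon\le\eta$, so $T_s$ is mean-L-stable; the constant $\delta$ works for every $s$, which is exactly statement (3). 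The implication (2) $\Rightarrow$ (1) in the stable case is symmetric, starting from mean-L-stability of some $T_s$, invoking Lemma~\ref{Dens-dens}(2) in place of (1), and again choosing the auxiliary parameter so that the factor $C_s$ is absorbed into $\eta$. For the unstable readings one argues in exactly the same spirit: a pair $x,y$ in a given nonempty open set $U$ witnessing $\uDens(\{t\ge 0\colon\Vert T_t(x-y)\Vert>\delta\})\ge\delta$ also witnesses $\udens(\{j\colon\Vert(T_s)^j(x-y)\Vert>\delta/C_s\})\ge\delta$, because $\{t\ge 0\colon\Vert T_t(x-y)\Vert>\delta\}$ is contained in the union of the intervals $[js,(j+1)s)$ over those $j$ with $\Vert(T_s)^j(x-y)\Vert>\delta/C_s$ (using $T_tz=T_{t-js}(T_s)^jz$ on $[js,(j+1)s)$ and $\Vert T_{t-js}\Vert\le C_s$); conversely, a pair witnessing discrete instability for $T_s$ witnesses the continuous one via $(T_s)^jz=T_{js-t}T_tz$ for $t\in[(j-1)s,js)$, and taking $\delta'=\delta/\max\{1,C_s\}$ supplies the required uniform constant. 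This is nothing but the superlevel-set form of parts (3) and (4) of Lemma~\ref{Dens-dens}.

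I expect the proof to contain no real obstacle, only bookkeeping: one must keep the constant $C_s$ and the several auxiliary parameters straight while passing through the density conversions (using $\uDens(A^c)=1-\lDens(A)$ and $\udens(A^c)=1-\ldens(A)$), and one must check that the constant produced in each transfer depends only on $s$ and on the data of the hypothesis, so that it is genuinely uniform over the relevant quantifier. A secondary point worth stating explicitly is that the three assertions become equivalent precisely because the cycle passes through (2) $\Rightarrow$ (1): it is that step which upgrades ``mean-L-(un)stable for some $s$'' to the statement for the whole semigroup, which then re-descends via (1) $\Rightarrow$ (3) to ``for every $s$''. One could alternatively derive the unstable equivalences from the stable ones by first establishing the dichotomy ``$(T_t)_{t\ge 0}$ is mean-L-stable or mean-L-unstable'' for the semigroup, using Theorem~\ref{thm:meanLstableunstable} for a single $T_s$ together with the stable part already proved, but the direct argument above is shorter.
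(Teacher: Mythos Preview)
Your proof is correct and follows essentially the same route as the paper: both arguments cycle through (1) $\Rightarrow$ (3) $\Rightarrow$ (2) $\Rightarrow$ (1), using Lemma~\ref{Dens-dens} and the local equicontinuity constant $C_s$ to pass between continuous and discrete densities (the paper applies the lemma directly in superlevel-set form, you go via sublevels and complement, which is equivalent). One small slip: in your (1) $\Rightarrow$ (3) for the stable case you write ``the constant $\delta$ works for every $s$'', but $\delta$ depends on $\varepsilon=\eta/\max\{1,C_s\}$ and hence on $s$; this is harmless, since assertion (3) only asks that each $T_s$ be mean-L-stable, not that a single $\delta$ serve all $s$ simultaneously.
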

\begin{proof}
$(\ref{every T_s})  \Rightarrow (\ref{some T_s})$: It is clear.

$(\ref{(T_t)}) \Rightarrow (\ref{every T_s})$: For every $s>0$, and let $C_s=\sup_{t\in [0,s]}\Vert T_t \Vert$. Since $(T_t)_{t>0}$ is mean-L-stable, then for any $\epsilon>0$, there exists a $\delta>0$, such that for any $x,y\in X$ with $d(x,y)<\delta$, 
\[
    \uDens(\{t\ge 0: \Vert T_tx- T_ty\Vert \ge \tfrac{\epsilon}{C_s} \} )<\epsilon,
\]
then by Lemma \ref{Dens-dens}, one has
\[
\udens(\{j\in\mathbb{N}\colon \Vert (T_s)^jx-(T_s)^jy\Vert \ge \varepsilon\})\leq \uDens(\{t\geq 0\colon \Vert T_tx-T_ty\Vert\ge \tfrac{\epsilon}{C_s}\})<\epsilon.
\]
This shows that $T_s$ is mean-L-stable.

$(\ref{some T_s}) \Rightarrow (\ref{(T_t)}) $:
We may assume that $T_s$ is mean-L-stable for some $s>0$.
Put $C_s=\sup_{t\in [0,s]}\Vert T_s \Vert$.
Then for any $\epsilon>0$, there exists a $\delta>0$ such that for any $x,y\in X$ with $d(x,y)<\delta$, 
\[
    \udens(\{j\in \mathbb{N}: \Vert (T_s)^jx-(T_s)^jy \Vert \ge \tfrac{\epsilon}{C_s} \} )<\epsilon,
\]
thus by Lemma \ref{Dens-dens}, one has
\[
\uDens(\{t\ge 0: \Vert T_tx-T_ty \Vert \ge \epsilon \}) \le \udens(\{j\in \mathbb{N}: \Vert (T_s)^jx-(T_s)^jy \Vert \ge \tfrac{\epsilon}{C_s} \} )<\epsilon.
\]
This shows that $(T_t)_{t\ge 0}$ is mean-L-stable.

By a similar proof, Proposition \ref{prop:C0meanL} for mean-L-unstable case holds.
\end{proof}

In order to obtain a parallel result to Theorem \ref{thm:main-result}, a similar parameter need to be introduced as follows. If $(T_t)_{t\geq 0}$ is a hypercyclic $C_0$-semigroup on a Banach space $X$,   
for each $R>0$, define 
\[
c_R((T_t)_{t\geq 0}) =\sup_{x\in HC((T_t)_{t\geq 0})} \uDens(\{t\in\mathbb{R}_{\geq0 }\colon \Vert T_tx\Vert<R\}),
\]
and  
\[
c((T_t)_{t\geq 0} )=\sup_{R>0} c_R((T_t)_{t\geq 0}).
\]

The following proposition plays a similar role in the proof of Theorem \ref{thm:C-semigroup-result} as that Proposition \ref{prop:c-T-eq} in the proof of Theorem \ref{thm:main-result}, and can be proved in a similar way as Proposition~\ref{prop:C0meanL}, hence we omit the proof.
\begin{prop} 
Let $(T_t)_{t\geq 0}$ be a hypercyclic $C_0$-semigroup on a Banach space $X$. 
Then $c((T_t)_{t\geq 0} )=c(T_s)$ for every $s>0$.
\end{prop}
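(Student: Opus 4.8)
The plan is to prove $c((T_t)_{t\ge0}) = c(T_s)$ for every fixed $s>0$ by establishing the two inequalities separately, using Lemma~\ref{Dens-dens} to transfer between continuous densities ($\uDens$) for the semigroup and discrete densities ($\udens$) for the power sequence of $T_s$. Throughout, write $C_s = \sup_{t\in[0,s]}\Vert T_t\Vert$, which is finite by local equicontinuity. The key observation that makes this work is that $HC((T_t)_{t\ge0}) = HC(T_s)$, which is exactly the content of the hypercyclicity equivalence theorem quoted just above (Theorem from \cite{CMP2007}/\cite{GP2011}*{Theorem 7.26}): $x$ is hypercyclic for the semigroup if and only if it is hypercyclic for $T_s$. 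So the supremum in the definition of $c$ and of $c(T_s)$ is taken over the same set of vectors, and the entire argument reduces to comparing, for each such $x$ and each $R>0$, the quantity $\uDens(\{t\ge0\colon\Vert T_tx\Vert<R\})$ with $\udens(\{j\in\mathbb{N}\colon\Vert (T_s)^j x\Vert<R'\})$ for suitably adjusted radii $R'$.

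First I would prove $c((T_t)_{t\ge0}) \le c(T_s)$. Fix $x\in HC((T_t)_{t\ge0}) = HC(T_s)$ and $R>0$. By Lemma~\ref{Dens-dens}(3), $\uDens(\{t\ge0\colon\Vert T_tx\Vert<R\}) \le \udens(\{j\in\mathbb{N}\colon\Vert (T_s)^j x\Vert<C_sR\})$, and the right-hand side is at most $c_{C_sR}(T_s) \le c(T_s)$. Taking the supremum over $x\in HC((T_t)_{t\ge0})$ and then over $R>0$ gives $c((T_t)_{t\ge0}) \le c(T_s)$. For the reverse inequality $c(T_s) \le c((T_t)_{t\ge0})$, fix $x\in HC(T_s) = HC((T_t)_{t\ge0})$ and $R>0$; by Lemma~\ref{Dens-dens}(4), $\udens(\{j\in\mathbb{N}\colon\Vert (T_s)^j x\Vert<R\}) \le \uDens(\{t\ge0\colon\Vert T_tx\Vert<C_sR\}) \le c_{C_sR}((T_t)_{t\ge0}) \le c((T_t)_{t\ge0})$. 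Taking suprema over $x$ and then over $R$ yields $c(T_s) \le c((T_t)_{t\ge0})$. Combining the two gives the claimed equality.

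The main obstacle, such as it is, is purely bookkeeping: one must make sure the radius adjustments by the factor $C_s$ do not cause trouble when passing to the supremum over $R>0$. This is harmless because $R \mapsto C_sR$ is a bijection of $(0,\infty)$ onto itself, so $\sup_{R>0} c_{C_sR}(T_s) = \sup_{R>0} c_R(T_s) = c(T_s)$ and likewise on the semigroup side; hence no information is lost. A secondary point worth stating explicitly is that the identification $HC((T_t)_{t\ge0}) = HC(T_s)$ is what lets us take the same supremum set on both sides — without it the two parameters would not obviously agree. Since the paper notes this proposition "can be proved in a similar way as Proposition~\ref{prop:C0meanL}," the intended proof is precisely this Lemma~\ref{Dens-dens} sandwich argument, parallel to the mean-L-stability transfer already carried out there, and I would present it in that same style, omitting the routine verification that $\sup_{R>0}$ is unaffected by scaling.
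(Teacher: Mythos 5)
Your proof is correct and is exactly the argument the paper intends (the paper omits it, pointing to the same Lemma~\ref{Dens-dens} sandwich used for Proposition~\ref{prop:C0meanL}): the identification $HC((T_t)_{t\ge0})=HC(T_s)$ plus parts (3) and (4) of Lemma~\ref{Dens-dens}, with the harmless rescaling $R\mapsto C_sR$ absorbed by the supremum over $R>0$. Nothing further is needed.
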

We also need to introduce some concepts in terms of density for $C_0$-semigroups, as we did in Section 1 for a single operator.
Let $(T_t)_{t\geq 0}$ be a $C_0$-semigroup on a Banach space $X$.
A vector $x\in X$ is said to be \emph{distributionally irregular of type $1$} for $(T_t)_{t\geq 0}$ if there exist two measurable subsets $A$ and $B$ of $\mathbb{R}_{\geq 0}$ 
such that $\uDens(A)=\uDens(B)=1$, $\lim\limits_{A\ni t\to\infty} \Vert T_t x\Vert =0$ and $\lim\limits_{B\ni t\to\infty} \Vert T_t x\Vert =\infty$.  
A vector $x\in X$ is said to be \emph{distributionally irregular of type $2\frac{1}{2}$} for $(T_t)_{t\geq 0}$ 
if there exist two measurable  subsets $A$ and $B$ of $\mathbb{R}_{\geq 0}$ such that 
$\uDens(A)\uDens(B)>0$, $\lim\limits_{A\ni t\to\infty} \Vert T_t x\Vert =0$ and $\lim\limits_{B\ni t\to\infty} \Vert T_t x\Vert =\infty$. 
A vector $x\in X$ is said to be \emph{asymptotic to zero with density one} if there exists a measurable subset $A$ of $\mathbb{R}_{\geq 0}$ with $\Dens(A)=1$ such that $\lim\limits_{A\ni t\to\infty}\Vert T_t x\Vert =0$,
and \emph{divergent to infinity with density one}
if there exists a measurable subset $A$ of $\mathbb{R}_{\geq 0}$ with $\Dens(A)=1$ such that $\lim\limits_{A\ni t\to\infty}\Vert T_t x\Vert =\infty$.

Now we are ready to state our main result for the case of $C_0$-semigroups, whose proof is similar to that of Theorem \ref{thm:main-result}, and we omit it.
\begin{thm} \label{thm:C-semigroup-result}
If $(T_t)_{t\geq 0}$ is a hypercyclic $C_0$-semigroup on a Banach space $X$, then exactly one of the following four assertions holds: 
\begin{enumerate}
    \item every vector in $X$ is asymptotic to zero with density one;
    \item generic vectors in $X$ are distributionally irregular of type $1$;
    \item generic vectors in $X$ are distributionally irregular of type $2\frac{1}{2}$, and no hypercyclic vector is distributionally irregular of type $1$;
    \item every hypercyclic vector in $X$ is divergent to infinity with density one.
\end{enumerate}
\end{thm}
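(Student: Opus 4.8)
The plan is to reduce everything to a single operator $T_s$ with $s>0$ fixed (say $s=1$), using the three compatibility facts already assembled in this section. First, the equivalence theorem for hypercyclicity of $C_0$-semigroups quoted above gives that $T_1$ is a hypercyclic operator on $X$ and that $HC((T_t)_{t\ge 0})=HC(T_1)$. Second, Proposition~\ref{prop:C0meanL} gives that $(T_t)_{t\ge 0}$ is mean-L-stable (resp.\ mean-L-unstable) if and only if $T_1$ is. Third, the proposition above asserting $c((T_t)_{t\ge 0})=c(T_s)$ gives $c((T_t)_{t\ge 0})=c(T_1)$. Consequently the four mutually exclusive and exhaustive alternatives that organise the proof of Theorem~\ref{thm:main-result} --- namely $T_1$ mean-L-stable, or $T_1$ mean-L-unstable with $c(T_1)=1$, or with $0<c(T_1)<1$, or with $c(T_1)=0$ --- are intrinsic to the semigroup, and in each case Theorem~\ref{thm:main-result} applied to $T_1$ tells us exactly which of its four assertions holds for $T_1$.

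The core of the proof is then a translation dictionary: for a fixed vector $x\in X$ I would show that, for each of the four properties under discussion (asymptotic to zero with density one, divergent to infinity with density one, distributionally irregular of type $1$, distributionally irregular of type $2\frac{1}{2}$), $x$ has that property for the semigroup $(T_t)_{t\ge 0}$ if and only if it has it for $T_1$. The engine is Lemma~\ref{Dens-dens}: writing $C_1=\sup_{t\in[0,1]}\Vert T_t\Vert$, it relates the densities of the continuous sublevel sets $\{t\ge 0\colon \Vert T_tx\Vert<\varepsilon\}$ to those of the discrete sublevel sets $\{j\in\mathbb{N}\colon \Vert T_1^jx\Vert<\varepsilon'\}$ at comparable thresholds (differing by at most the factor $C_1$), with items~(1)--(2) transporting lower-density information and items~(3)--(4) upper-density information, in both directions. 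Invoking in addition Lemma~\ref{lem:density}, its continuous analogue (proved by the same argument), the positive-density version of its first part that is implicit in the type-$2\frac{1}{2}$ conclusion in the proof of Theorem~\ref{thm:main-result}, and --- for the ``divergent to infinity'' clauses --- passage to complements, one recasts each of the four properties as an assertion about whether certain sublevel-set densities vanish, are positive, or equal $1$. All three of these conditions are plainly unaffected by rescaling the threshold from $\varepsilon$ to $C_1\varepsilon$, which yields the desired vector-by-vector equivalence; here the identity $c((T_t)_{t\ge 0})=c(T_1)$ is exactly what makes the ``type $2\frac{1}{2}$'' thresholds line up.

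Granting the dictionary, the theorem is immediate: apply Theorem~\ref{thm:main-result} to $T_1$, so exactly one of its assertions (1)--(4) holds for $T_1$, and translate it. Assertion~(1) transfers because the set of vectors asymptotic to zero with density one is the same for $(T_t)_{t\ge 0}$ as for $T_1$, and equals $X$; assertion~(4), and the ``generic'' halves of assertions~(2) and~(3), transfer because the ambient space $X$ --- hence the notion of a dense $G_\delta$ set --- is unchanged and $HC((T_t)_{t\ge 0})=HC(T_1)$; the clause ``no hypercyclic vector is distributionally irregular of type $1$'' in~(3) transfers because both $HC((T_t)_{t\ge 0})=HC(T_1)$ and the set of type-$1$ vectors are unchanged. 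Finally, exactly one of the four assertions holds for $(T_t)_{t\ge 0}$, because exactly one holds for $T_1$ and the four assertions are pairwise incompatible --- these incompatibilities being precisely the ones behind the ``exactly one'' in Theorem~\ref{thm:main-result}, and in any case preserved by the dictionary. I expect the only real obstacle to be the dictionary of the second paragraph: it is entirely routine but bookkeeping-intensive, since for each implication one must select the correct one of the four inequalities in Lemma~\ref{Dens-dens} and keep track of the constant $C_1$ and of strict versus non-strict inequalities; verifying the continuous analogue of Lemma~\ref{lem:density} is the only other loose end.
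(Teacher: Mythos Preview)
The paper does not actually give a proof of this theorem: it states that the proof ``is similar to that of Theorem~\ref{thm:main-result}, and we omit it.'' Your proposal is a correct and carefully thought-out way to supply the missing details, and it uses precisely the compatibility tools the paper assembles in Section~4 for this purpose (the hypercyclicity equivalence $HC((T_t)_{t\ge 0})=HC(T_s)$, Proposition~\ref{prop:C0meanL}, the identity $c((T_t)_{t\ge 0})=c(T_s)$, and Lemma~\ref{Dens-dens}), so it is entirely in the spirit of what the paper intends.

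One minor remark on strategy: the phrase ``similar to that of Theorem~\ref{thm:main-result}'' could also be read as an invitation to rerun the four-case argument directly in the continuous setting (proving continuous analogues of Propositions~\ref{prop:mean-l-stable}--\ref{prop:c-T-eq} and Remark~\ref{rem:c-T-generic}), rather than applying Theorem~\ref{thm:main-result} to $T_1$ as a black box and translating. Your reduction route is cleaner and avoids reproving those propositions, at the cost of the vector-by-vector dictionary you outline; the direct-rerun route would instead need continuous versions of Remark~\ref{rem:c-T-generic} and Proposition~\ref{prop:D-densitive}. Either route works, and the paper's preparatory results support both.
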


There are some examples in \cite{ABMP2013} to show that 
the case (2) in Theorem \ref{thm:C-semigroup-result} can occur. In the following, we will give more examples to show that the other three cases in Theorem \ref{thm:C-semigroup-result} can occur.

\subsection{Unilateral translation semigroup}
By an \emph{admissible weight function} on $\mathbb{R}_{\geq 0}$, we mean a measurable function 
$\rho\colon \mathbb{R}_{\geq 0}\to \mathbb{R}$ satisfying the following two conditions:
\begin{enumerate}
\item $\rho(t)>0$ for all $t\in\mathbb{R}_{\geq 0}$;
\item there exist constants $M\geq 1$ and $\omega>0$ such that $\rho(s)\leq Me^{\omega t} \rho(t+s)$ for all $s\in\mathbb{R}_{\geq t}$ and all $t>0$.
\end{enumerate}

Let $1\leq p<\infty$ and $\rho$ be an admissible weight function on $\mathbb{R}_{\geq 0}$. 
We consider the separable Banach space of $p$-integrable functions in the Lebesgue sense as 
\[
L_\rho^p(\mathbb{R}_{\geq 0}):=\{ f \colon \mathbb{R}_{\geq 0}\to \mathbb{R}\text{ measurable}, \Vert f\Vert_p<\infty\},
\]
where 
\[
\Vert f\Vert_p = \biggl(\int_{\mathbb{R}_{\geq 0}} |f(s)|^p \rho(s)\dd s\biggr)^{1/p}.
\]
The \emph{translation semigroup} on $L_\rho^p(\mathbb{R}_{\geq 0})$ defined by $(T_t f)(x)=f(x+t)$, $t,x\in\mathbb{R}_{\geq 0}$, 
is a well-defined $C_0$-semigroup by the definition of admissible weight.

Let $v=(v_j)_{j\in\mathbb{N}}$ be a positive weight sequence with $\sup_{j\in\mathbb{N}}\frac{v_j}{v_{j+1}}<\infty$.
Following \cite{BP2012}, we construct an admissible weight function induced by $v$. 
For each $t\in (n-1,n]$ for some $n\in\mathbb{N}$, 
let $\rho_v(t)=v_n$, and $\rho_v(0)=v_1$.
Then it is easy to see that $\rho_v$ is an admissible weight function.

For each $f\in L_\rho^p (\mathbb{R}_{\geq 0})$ and $n\in\mathbb{N}$,
define $x_n^f=\Bigl(\int_{[n-1,n]} |f(t)|^p \dd t\Bigr)^{1/p}$.
Let $x^f=(x_n^f)_{n\in\mathbb{N}}$.
Then  
\[
\Vert x^f\Vert_p^p = \sum_{n=1}^\infty |x^f_n|^p v_n 
= \sum_{n=1}^\infty \int_{[n-1,n]} |f(t)|^p \rho(t) \dd t
=\int_{\mathbb{R}_{\geq 0}} |f(t)|^p \rho(t) \dd t 
=\Vert f\Vert_p^p.
\]
%Let $\phi\colon L_\rho^p (\mathbb{R}_{\geq 0}) \to \ell^p(v,\mathbb{N})$, $f\mapsto x^f$,
%be the corresponding map. 
%Note that it is not linear, and also not surjective.

Combining Examples 4.4, 4.7 and 7.10 in \cite{GP2011}, 
we have the following result.

\begin{prop}
Let $v=(v_j)_{j\in\mathbb{N}}$ be a positive weight sequence with $\sup_{j\in\mathbb{N}}\frac{v_j}{v_{j+1}}<\infty$.
If $(\ell^p(v),B)$ is hypercyclic (resp. mixing), then $(L_{\rho_v}^{p} (\mathbb{R}_{\geq 0}), (T_t)_{t\geq 0})$ is hypercyclic (resp. mixing).
\end{prop}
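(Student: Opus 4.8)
The plan is to reduce both statements to explicit conditions on the weight sequence $v$ and then to compare them. First I would pass, via the canonical isometry $\ell^p(v,\mathbb{N})\to\ell^p(\mathbb{N})$, $x\mapsto(v_n^{1/p}x_n)_n$, which identifies $B$ with the weighted backward shift on $\ell^p$ having weights $w_n=(v_{n-1}/v_n)^{1/p}$; since $\prod_{\nu=2}^n w_\nu=(v_1/v_n)^{1/p}$, Salas' criterion (Examples 4.4 and 4.7 in \cite{GP2011}) says that $(\ell^p(v),B)$ is hypercyclic iff $\inf_n v_n=0$ and mixing iff $\lim_n v_n=0$. Because the standing assumption $\sup_j v_j/v_{j+1}=:C<\infty$ forces $v_{m-j}\le C^j v_m$ for all $m$ and $0\le j\le m$, small weights ``spread to the left'', and I would use this to reformulate hypercyclicity of $B$ in the more convenient form: there is a strictly increasing sequence $(n_k)$ in $\mathbb{N}$ with $\lim_{k\to\infty}v_{j+n_k}=0$ for every $j\in\mathbb{N}$ (given $m_k$ with $v_{m_k}\le C^{-k^2}$ and $m_k$ increasing fast, take $n_k=m_k-k$), and mixing in the form $\lim_{n\to\infty}v_{j+n}=0$ for every fixed $j$.

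Next I would transfer this to the semigroup. Example 7.10 in \cite{GP2011} provides a ready-made integral criterion for the translation semigroup, but it is cleanest to verify the Hypercyclicity (resp. mixing) Criterion for $C_0$-semigroups directly: take $X_0=Y_0$ to be the simple functions of bounded support (a dense subspace of $L^p_{\rho_v}(\mathbb{R}_{\geq 0})$) and set $S_tf(x):=f(x-t)\mathbf{1}_{[t,\infty)}(x)$. Then $T_tS_tf=f$, $T_tf=0$ as soon as $t$ exceeds the support of $f$, and $\Vert S_tf\Vert_p^p=\int_0^\infty|f(y)|^p\rho_v(y+t)\dd y$; so the whole matter comes down to controlling $\int_0^s\rho_v(y+t)\dd y$ for $s>0$. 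Here the construction of $\rho_v$ is used: since $\rho_v(\tau)=v_{\lceil\tau\rceil}$ for $\tau>0$, one has $\int_0^s\rho_v(y+m)\dd y\le\sum_{i=1}^{\lceil s\rceil}v_{m+i}$ for $m\in\mathbb{N}$, and $\int_0^s\rho_v(y+t)\dd y\le s\cdot\max\{v_n:\lceil t\rceil\le n\le\lceil t+s\rceil\}$ for arbitrary $t\geq 0$.

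The conclusion is then immediate. If $(\ell^p(v),B)$ is hypercyclic, choose $(n_k)\uparrow$ with $v_{j+n_k}\to0$ for every $j$; then for each fixed $s>0$ we get $\int_0^s\rho_v(y+n_k)\dd y\le\sum_{i=1}^{\lceil s\rceil}v_{n_k+i}\to0$, so $(t_k)=(n_k)$ witnesses the Hypercyclicity Criterion and $(T_t)_{t\geq 0}$ is hypercyclic; if $(\ell^p(v),B)$ is mixing, then $v_n\to0$, hence $\int_0^s\rho_v(y+t)\dd y\le s\cdot\max\{v_n:\lceil t\rceil\le n\le\lceil t+s\rceil\}\to0$ as $t\to\infty$ for every $s>0$, which is the mixing criterion, so $(T_t)_{t\geq 0}$ is mixing. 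I expect the only delicate point --- the ``hard part'' --- to be the reformulation in the first paragraph: it is precisely the continuity constraint $\sup_j v_j/v_{j+1}<\infty$ that upgrades ``$\inf_n v_n=0$'' to ``$v$ has arbitrarily long blocks of simultaneously small weights'', and that is exactly what is needed to treat all $s>0$ along a single sequence $t_k$; without it the implication would be false. Everything else is routine bookkeeping, together with the standard facts that a weighted backward shift is hypercyclic (resp. mixing) exactly when it satisfies the corresponding Criterion and likewise for translation semigroups.
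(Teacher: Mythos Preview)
Your proposal is correct and follows the same route as the paper, whose entire proof is the single sentence ``Combining Examples 4.4, 4.7 and 7.10 in \cite{GP2011}''; you have simply unpacked those three citations in detail. The one nontrivial step hidden behind that bare citation---using $\sup_j v_j/v_{j+1}<\infty$ to upgrade $\inf_n v_n=0$ to the existence of arbitrarily long blocks of simultaneously small weights---is exactly the point you correctly flag as the hard part.
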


We now have the following result, which, when combined with Example~\ref{exam:case-1} from the previous section, shows that
case (1) in Theorem~\ref{thm:C-semigroup-result} can occur.

\begin{prop}
Let $v=(v_j)_{j\in\mathbb{N}}$ be a positive weight sequence with $\sup_{j\in\mathbb{N}}\frac{v_j}{v_{j+1}}<\infty$.
If $(\ell^p(v,\mathbb{N}),B)$ is mixing and every vector is asymptotic to zero with density one, 
then $(L_{\rho_v}^p (\mathbb{R}_{\geq 0}), (T_t)_{t\geq 0})$ 
is mixing and every vector is asymptotic to zero with density one.
\end{prop}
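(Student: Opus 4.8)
The plan is to transfer both properties from the discrete system $(\ell^p(v,\mathbb{N}),B)$ to the semigroup $(L^p_{\rho_v}(\mathbb{R}_{\geq 0}),(T_t)_{t\geq 0})$ through the norm-preserving map $f\mapsto x^f$ constructed just before the statement. The mixing assertion is immediate from the preceding proposition, so the whole content lies in showing that every $f\in L^p_{\rho_v}(\mathbb{R}_{\geq 0})$ is asymptotic to zero with density one for $(T_t)_{t\geq 0}$.

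First I would record the intertwining identity
\[
x^{T_nf}=B^nx^f,\qquad\text{hence}\qquad \Vert T_nf\Vert_p=\Vert B^nx^f\Vert\quad\text{for all }n\in\mathbb{N},
\]
which comes from the same computation that yields $\Vert x^f\Vert_p=\Vert f\Vert_p$: as $\rho_v$ is constant on each interval $(m-1,m]$, one has $\int_0^\infty|f(s+n)|^p\rho_v(s)\dd s=\sum_{m\geq 1}v_m\int_{m-1}^m|f(s+n)|^p\dd s=\sum_{m\geq 1}v_m\,(x_{m+n}^f)^p=\Vert B^nx^f\Vert^p$. Since $\Vert x^f\Vert_p=\Vert f\Vert_p<\infty$, the vector $x^f$ lies in $\ell^p(v,\mathbb{N})$; by hypothesis it is asymptotic to zero with density one for $B$, so Lemma~\ref{lem:density}\,\eqref{lem_item2} gives, for every $\varepsilon>0$,
\[
\dens\bigl(\{n\in\mathbb{N}\colon \Vert T_nf\Vert_p<\varepsilon\}\bigr)=\dens\bigl(\{n\in\mathbb{N}\colon \Vert B^nx^f\Vert<\varepsilon\}\bigr)=1.
\]

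Next I would pass from integer times to all real times using local equicontinuity. Put $C=\sup_{t\in[0,1]}\Vert T_t\Vert<\infty$. Applying Lemma~\ref{Dens-dens}(2) with $s=1$ (so that $(T_1)^n=T_n$) yields, for every $\varepsilon>0$,
\[
1=\ldens\bigl(\{n\in\mathbb{N}\colon \Vert T_nf\Vert_p<\varepsilon\}\bigr)\leq\lDens\bigl(\{t\geq 0\colon \Vert T_tf\Vert_p<C\varepsilon\}\bigr),
\]
whence $\Dens(\{t\geq 0\colon \Vert T_tf\Vert_p<C\varepsilon\})=1$; replacing $\varepsilon$ by $\varepsilon/C$ shows $\Dens(\{t\geq 0\colon \Vert T_tf\Vert_p<\varepsilon\})=1$ for every $\varepsilon>0$. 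A routine diagonal argument over $\varepsilon=1/k$, $k\in\mathbb{N}$ (the obvious continuous analogue of Lemma~\ref{lem:density}\,\eqref{lem_item2}), then produces a measurable set $A\subset\mathbb{R}_{\geq 0}$ with $\Dens(A)=1$ and $\lim_{A\ni t\to\infty}\Vert T_tf\Vert_p=0$. Hence $f$ is asymptotic to zero with density one, and since $f$ was arbitrary the proof is complete.

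I expect no genuine obstacle. The only mildly delicate point is that $f\mapsto x^f$ is not linear, so subspace-type reasoning (as in Proposition~\ref{prop:mean-l-stable}(2)) does not transfer directly; but ``asymptotic to zero with density one'' is a statement about a single orbit, and the envelope map is norm-preserving and shift-equivariant, so the property passes orbit by orbit, while the bridge between $\mathbb{N}$ and $\mathbb{R}_{\geq 0}$ is furnished by Lemma~\ref{Dens-dens}.
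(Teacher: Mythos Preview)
Your argument is correct and is precisely the proof the paper is implicitly pointing to: the paper states this proposition without proof, having just set up the isometric, shift-equivariant map $f\mapsto x^f$ and Lemma~\ref{Dens-dens} for exactly this purpose. Your use of $\Vert T_nf\Vert_p=\Vert B^n x^f\Vert$ to import the discrete density-one statement, followed by Lemma~\ref{Dens-dens}(2) with $s=1$ to upgrade to continuous times, is the intended route; nothing is missing.
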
 

\subsection{Bilateral translation semigroup}
Similar to that in previous subsection, by an \emph{admissible weight function} on $\mathbb{R}$, we mean a measurable function 
$\rho\colon \mathbb{R}\to \mathbb{R}$ satisfying the following two conditions:
\begin{enumerate}
\item $\rho(t)>0$ for all $t\in\mathbb{R}$;
\item there exist constants $M\geq 1$ and $\omega>0$ such that $\rho(s)\leq Me^{\omega t} \rho(t+s)$ for all $s\in\mathbb{R}$ and all $t>0$.
\end{enumerate}

Let $1\leq p<\infty$ and  $\rho$ be an admissible weight function on $\mathbb{R}$. 
Just as that in Subsection 4.1, we consider the separable Banach space of $p$-integrable functions in the Lebesgue sense as 
\[
L_\rho^p(\mathbb{R}):=\{ f \colon \mathbb{R}\to \mathbb{R}\text{ measurable}, \Vert f\Vert_p<\infty\},
\]
where 
\[
\Vert f\Vert_p = \biggl(\int_{\mathbb{R}} |f(s)|^p \rho(s)\dd s\biggr)^{1/p}.
\]
Similarly, the \emph{translation semigroup} on $L_\rho^p(\mathbb{R})$ defined by $(T_t f)(x)=f(x+t)$, $t,x\in\mathbb{R}$, 
is a well-defined $C_0$-semigroup by the definition of admissible weight.

Let $v=(v_j)_{j\in\mathbb{Z}}$ be a positive weight sequence with $\sup_{j\in\mathbb{Z}}\frac{v_j}{v_{j+1}}<\infty$.
For each $t\in (n-1,n]$ for some $n\in\mathbb{Z}$, let $\rho_v(t)=v_n$.
Clearly $\rho_v$ is an admissible weight function.

For each $f\in L_\rho^p (\mathbb{R})$ and $n\in\mathbb{Z}$,
define $x_n^f=\Bigl(\int_{[n-1,n]} |f(t)|^p \dd t\Bigr)^{1/p}$.
Let $x^f=(x_n^f)_{n\in\mathbb{Z}}$.
Similar to the discussion in the previous subsection, we know that 
$\Vert x^f\Vert_p^p =\Vert f\Vert_p^p$. 
%In addition, let $\phi\colon L_\rho^p (\mathbb{R}) \to \ell^p(v,\mathbb{Z})$,
%$f\mapsto x^f$, be the nonlinear corresponding map. 
Then similar to the above construction, we have the following:

\begin{prop}
Let $v=(v_j)_{j\in\mathbb{Z}}$ be a positive weight sequence with $\sup_{j\in\mathbb{Z}}\frac{v_j}{v_{j+1}}<\infty$.
If $(\ell^p(v,\mathbb{Z}),B)$ is hypercyclic and every non-zero vector is distributionally irregular of type 1,
then $(L_{\rho_v}^p (\mathbb{R}), (T_t)_{t\geq 0})$ 
is hypercyclic and  every non-zero vector is distributionally irregular of type 1.
\end{prop}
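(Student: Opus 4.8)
The plan is to transfer the hypothesis on $B$ to the semigroup through the norm-preserving correspondence $f\mapsto x^f$ constructed above, and then to pass from integer times to all of $\mathbb{R}_{\ge 0}$ using local equicontinuity. The starting point is an identity: since $\rho_v$ is constant and equal to $v_n$ on $(n-1,n]$, the substitution $y=x+m$ gives, for every integer $m\ge 0$,
\[
\|T_m f\|_p^p=\int_{\mathbb{R}}|f(y)|^p\rho_v(y-m)\,\dd y=\sum_{n\in\mathbb{Z}}v_{n-m}\,|x^f_n|^p=\|B^m x^f\|_p^p ,
\]
so $\|T_m f\|=\|B^m x^f\|$ for all $m\in\mathbb{N}$ (the case $m=0$ being $\|x^f\|=\|f\|$, already noted). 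Note that $f\mapsto x^f$ is \emph{not} injective — it records only the $L^p$-norm of $f$ over each unit block — so it is not a conjugacy; the point is that it intertwines the integer translations with $B$ at the level of orbit norms, which is all that distributional irregularity sees. Hypercyclicity of $(L^p_{\rho_v}(\mathbb{R}),(T_t)_{t\ge0})$ I would obtain exactly as in the preceding propositions (it is equivalent to hypercyclicity of $B$ on $\ell^p(v,\mathbb{Z})$ via the criteria in \cite{GP2011}), so the remaining content is the assertion about distributional irregularity of type $1$.

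Next, fix an arbitrary non-zero $f\in L^p_{\rho_v}(\mathbb{R})$. Then $x^f$ is a non-zero vector of $\ell^p(v,\mathbb{Z})$, hence distributionally irregular of type $1$ for $B$ by hypothesis; by Lemma~\ref{lem:density}(1) this is equivalent to the two statements: $\udens(\{n\in\mathbb{N}\colon\|B^nx^f\|<\varepsilon\})=1$ for every $\varepsilon>0$, and $\udens(\{n\in\mathbb{N}\colon\|B^nx^f\|>R\})=1$ for every $R>0$. By the identity above, the same two statements hold with $\|T_nf\|$ in place of $\|B^nx^f\|$.

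To bridge to continuous time, set $C_1=\sup_{t\in[0,1]}\|T_t\|<\infty$. If $\|T_nf\|<\varepsilon/C_1$, then $\|T_{n+s}f\|=\|T_s(T_nf)\|\le C_1\|T_nf\|<\varepsilon$ for all $s\in[0,1)$, so $[n,n+1)\subseteq\{t\ge0\colon\|T_tf\|<\varepsilon\}$; taking the union over the density-one set of such $n$ shows $\uDens(\{t\ge0\colon\|T_tf\|<\varepsilon\})=1$ for every $\varepsilon>0$. Dually, if $m\ge1$ and $\|T_mf\|>C_1R$, then for $t\in[m-1,m)$ one has $\|T_mf\|=\|T_{m-t}(T_tf)\|\le C_1\|T_tf\|$, hence $\|T_tf\|>R$ and $[m-1,m)\subseteq\{t\ge0\colon\|T_tf\|>R\}$, which gives $\uDens(\{t\ge0\colon\|T_tf\|>R\})=1$ for every $R>0$. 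Finally, the measurable-set analogue of Lemma~\ref{lem:density}(1), together with its companion for divergence to infinity, yields measurable $A,B\subseteq\mathbb{R}_{\ge0}$ with $\uDens(A)=\uDens(B)=1$, $\lim_{A\ni t\to\infty}\|T_tf\|=0$ and $\lim_{B\ni t\to\infty}\|T_tf\|=\infty$; that is, $f$ is distributionally irregular of type $1$ for $(T_t)_{t\ge0}$, and since $f$ was an arbitrary non-zero vector we are done.

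I expect the genuine obstacle to be conceptual rather than computational: because $f\mapsto x^f$ is not injective one cannot transport the dynamical system wholesale, so the whole argument must be run at the level of orbit norms, where the correspondence behaves like a conjugacy. Once that identity is isolated, the discrete-to-continuous passage is routine equicontinuity bookkeeping; one should only make sure the measurable-set versions of Lemma~\ref{lem:density} are available in exactly the form used (they are as standard as the stated $\mathbb{N}$-versions).
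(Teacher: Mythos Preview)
Your proof is correct and is precisely the argument the paper has in mind: the paper omits this proof entirely, saying only that it follows ``similar to the above construction,'' and your route---the orbit-norm identity $\|T_m f\|=\|B^m x^f\|$ obtained from the block correspondence $f\mapsto x^f$, followed by the discrete-to-continuous passage via local equicontinuity (which is exactly the content of Lemma~\ref{Dens-dens})---is that construction written out in full.
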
 

\begin{prop}
Let $v=(v_j)_{j\in\mathbb{Z}}$ be a positive weight sequence with $\sup_{j\in\mathbb{Z}}\frac{v_j}{v_{j+1}}<\infty$.
If $(\ell^p(v,\mathbb{Z}),B)$ is hypercyclic and  
for every non-zero vector $x\in \ell^p(v,\mathbb{Z})$
there exists $\tau_x>0$ such that 
\[
    \ldens(\{n\in\mathbb{N}\colon \Vert B^n x\Vert\geq \tau_x\})\geq \tfrac{3}{7},
\]
and for any  vector with only finite non-zero coordinates $x\in \ell^p(v,\mathbb{Z})$,
\[
   \udens(\{n\in\mathbb{N}\colon \Vert B^n x\Vert\leq \varepsilon\})\geq \tfrac{4}{7}, 
\]
then  $(L_{\rho_v}^p (\mathbb{R}), (T_t)_{t\geq 0})$ 
is hypercyclic and for every non-zero function $f\in L_{\rho_v}^p (\mathbb{R})$
there exists $\tau_x>0$ such that 
\[
    \lDens(\{t\geq 0\colon \Vert T_t f\Vert\geq \tau_x\})\geq \tfrac{3}{7},
\]
for any function $f$ with only bounded support
\[
   \uDens(\{t\geq 0 \colon \Vert T_t f\Vert\leq \varepsilon\})\geq \tfrac{4}{7}.
\]
\end{prop}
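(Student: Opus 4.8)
The plan is to transfer the density estimates from the backward shift $B$ on $\ell^p(v,\mathbb{Z})$ to the translation semigroup $(T_t)_{t\ge0}$ on $L_{\rho_v}^p(\mathbb{R})$ by means of the correspondence $f\mapsto x^f$ introduced above. The one genuinely new computation is the pointwise identity
\[
\Vert T_m f\Vert_p=\Vert B^m x^f\Vert_p\qquad\text{for all }f\in L_{\rho_v}^p(\mathbb{R})\text{ and all integers }m\ge 1 ,
\]
which I would prove by the substitution $u=x+m$ in $\Vert T_m f\Vert_p^p=\int_{\mathbb{R}}|f(x+m)|^p\rho_v(x)\dd x$: the right-hand side becomes $\int_{\mathbb{R}}|f(u)|^p\rho_v(u-m)\dd u$, and since $\rho_v(u-m)=v_{n-m}$ for $u\in(n-1,n]$ this equals $\sum_{n}v_{n-m}|x_n^f|^p=\sum_{k}v_{k}|x^f_{k+m}|^p=\Vert B^m x^f\Vert_p^p$. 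Together with the already recorded identity $\Vert x^f\Vert_p=\Vert f\Vert_p$, this is all that the quantitative part needs; hypercyclicity of $(L_{\rho_v}^p(\mathbb{R}),(T_t)_{t\ge0})$ is obtained from the hypercyclicity of $(\ell^p(v,\mathbb{Z}),B)$ exactly as in the preceding propositions (cf.\ \cite{GP2011}), so I would not repeat that argument.

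I would next record two elementary consequences of $\Vert x^f\Vert_p=\Vert f\Vert_p$: if $f\ne 0$ in $L_{\rho_v}^p(\mathbb{R})$ then $x^f\ne 0$, and if $f$ has bounded support then $x^f$ has only finitely many non-zero coordinates. Then I apply Lemma~\ref{Dens-dens} with $s=1$; write $C_1=\sup_{t\in[0,1]}\Vert T_t\Vert$, which is finite by local equicontinuity of the semigroup, and note that $(T_1)^j=T_j$, so $\Vert(T_1)^j f\Vert=\Vert B^j x^f\Vert$ for every $j\in\mathbb{N}$ (and $t\mapsto\Vert T_tf\Vert$ is continuous, so all the level sets below are measurable).

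For the divergence statement, given a non-zero $f$ I take the constant $\tau_{x^f}>0$ furnished by the hypothesis for the non-zero vector $x^f$ and set $\tau=\tau_{x^f}/C_1$. Since
\[
\udens(\{j\in\mathbb{N}\colon\Vert B^j x^f\Vert<\tau_{x^f}\})=1-\ldens(\{j\in\mathbb{N}\colon\Vert B^j x^f\Vert\ge\tau_{x^f}\})\le\tfrac{4}{7},
\]
Lemma~\ref{Dens-dens}(3) yields $\uDens(\{t\ge 0\colon\Vert T_t f\Vert<\tau\})\le\tfrac{4}{7}$, and passing to complements gives $\lDens(\{t\ge 0\colon\Vert T_t f\Vert\ge\tau\})\ge\tfrac{3}{7}$. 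For the vanishing statement, given an $f$ with bounded support and $\varepsilon>0$, the hypothesis applied to $x^f$ at the threshold $\varepsilon/(2C_1)$ gives $\udens(\{j\in\mathbb{N}\colon\Vert B^j x^f\Vert<\varepsilon/C_1\})\ge\tfrac{4}{7}$, so Lemma~\ref{Dens-dens}(4) gives $\uDens(\{t\ge 0\colon\Vert T_t f\Vert<\varepsilon\})\ge\tfrac{4}{7}$ and hence $\uDens(\{t\ge 0\colon\Vert T_t f\Vert\le\varepsilon\})\ge\tfrac{4}{7}$.

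The step I expect to be the main obstacle is exactly the passage between the continuous parameter $t\in\mathbb{R}_{\ge0}$ and the integers: the identity $\Vert T_m f\Vert=\Vert B^m x^f\Vert$ is valid only at integer times, while $(T_t f)_{t\ge 0}$ moves continuously, so one cannot read off the continuous densities of the $(T_t f)$-orbit directly from the discrete densities of the $B$-orbit of $x^f$. Lemma~\ref{Dens-dens} closes this gap, but only up to the multiplicative factor $C_1$, which must be absorbed by rescaling the thresholds $\tau$ and $\varepsilon$; one also has to keep track of strict versus non-strict inequalities when comparing the various sublevel sets. Everything beyond this is routine.
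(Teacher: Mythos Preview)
Your proposal is correct and follows precisely the approach the paper sets up: the paper itself omits the proof of this proposition (it is introduced only with ``similar to the above construction, we have the following''), but the machinery it puts in place---the correspondence $f\mapsto x^f$ with $\Vert x^f\Vert_p=\Vert f\Vert_p$, together with Lemma~\ref{Dens-dens}---is exactly what you use, and your key identity $\Vert T_m f\Vert_p=\Vert B^m x^f\Vert_p$ for integer $m$ is the one computation the paper leaves to the reader. Your handling of the threshold rescalings by $C_1$ and of strict versus non-strict inequalities is correct.
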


By combining the above result with Example~\ref{exam:case3} from the previous section, we see that case (3) in Theorem \ref{thm:C-semigroup-result} can occur.

\begin{prop}
Let $v=(v_j)_{j\in\mathbb{Z}}$ be a positive weight sequence with $\sup_{j\in\mathbb{Z}}\frac{v_j}{v_{j+1}}<\infty$.
If $(\ell^p(v,\mathbb{Z}),B)$ is hypercyclic and every non-zero vector is divergent to infinity with density one, 
then $(L_{\rho_v}^p (\mathbb{R}), (T_t)_{t\geq 0})$ 
is hypercyclic and every non-zero vector is divergent to infinity with density one.
\end{prop}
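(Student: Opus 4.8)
The plan is to treat the two assertions separately; the density statement is the substantive one. For hypercyclicity of $(L_{\rho_v}^p(\mathbb{R}),(T_t)_{t\geq 0})$ I would argue exactly as in the analogous proposition for the unilateral translation semigroup above: the admissible weight $\rho_v$ is built from $v$ precisely so that the weight condition on $v$ characterizing hypercyclicity of the bilateral backward shift $B$ on $\ell^p(v,\mathbb{Z})$ is equivalent to the condition on $\rho_v$ characterizing hypercyclicity of the bilateral translation semigroup (combining the relevant statements from \cite{GP2011}*{Chapters 4 and 7}); equivalently, invoke the characterization of hypercyclicity for $C_0$-semigroups recalled above to reduce to hypercyclicity of the single operator $T_1$. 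This step is routine and identical to what appears in the preceding propositions, so no new work is involved.

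For the density assertion, fix a non-zero $f\in L_{\rho_v}^p(\mathbb{R})$ and pass to the associated sequence $x^f=(x^f_n)_{n\in\mathbb{Z}}$ with $x^f_n=\bigl(\int_{[n-1,n]}|f(t)|^p\dd t\bigr)^{1/p}$. Two facts about this passage are needed. First, $\Vert x^f\Vert^p=\Vert f\Vert^p$, so $x^f$ is a non-zero vector of $\ell^p(v,\mathbb{Z})$. Second, and this is the key point, for every $j\in\mathbb{N}$ one has $\Vert T_1^j f\Vert=\Vert B^j x^f\Vert$: since $(T_j f)|_{[n-1,n]}$ is, up to a translation, $f|_{[n+j-1,n+j]}$, we get $x^{T_j f}_n=x^f_{n+j}=(B^j x^f)_n$, i.e. $x^{T_j f}=B^j x^f$, and applying the norm identity once more together with $T_j=T_1^j$ gives the claim.

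Now apply the hypothesis to $x^f$: being divergent to infinity with density one for $B$ implies that for every $R>0$ the set $\{j\in\mathbb{N}\colon\Vert B^j x^f\Vert<R\}$ has density $0$, hence upper density $0$. Feeding the key identity and this into Lemma~\ref{Dens-dens}(3) with $s=1$ (so $C_1=\sup_{t\in[0,1]}\Vert T_t\Vert<\infty$ by local equicontinuity) yields, for every $\varepsilon>0$,
\[
\uDens(\{t\geq 0\colon\Vert T_t f\Vert<\varepsilon\})\leq\udens(\{j\in\mathbb{N}\colon\Vert B^j x^f\Vert<C_1\varepsilon\})=0,
\]
and therefore $\Dens(\{t\geq 0\colon\Vert T_t f\Vert\geq\varepsilon\})=1$ for every $\varepsilon>0$. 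A standard diagonal construction—the continuous analogue of part (2) of Lemma~\ref{lem:density} for divergence to infinity: pick $s_m\uparrow\infty$ with $\mu(\{u\leq t\colon\Vert T_u f\Vert<m\})<t/m$ for $t\geq s_m$, and delete from $\mathbb{R}_{\geq 0}$ the sets $\{t\in[s_m,s_{m+1})\colon\Vert T_t f\Vert<m\}$—then produces a measurable $A\subset\mathbb{R}_{\geq 0}$ with $\Dens(A)=1$ and $\lim_{A\ni t\to\infty}\Vert T_t f\Vert=\infty$. Hence $f$ is divergent to infinity with density one for $(T_t)_{t\geq 0}$.

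The only place requiring genuine care is the norm identity $\Vert T_1^j f\Vert=\Vert B^j x^f\Vert$—equivalently, the observation that under the isometric decomposition $L_{\rho_v}^p(\mathbb{R})\cong\bigl(\bigoplus_{n\in\mathbb{Z}}L^p([n-1,n])\bigr)_{\ell^p(v,\mathbb{Z})}$ the operator $T_1$ is precisely the weighted backward shift whose action on the slice norms is $B$. After that the argument is a mechanical application of Lemma~\ref{Dens-dens} plus routine density bookkeeping, and the hypercyclicity half contributes nothing beyond the preceding propositions.
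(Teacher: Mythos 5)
Your proof is correct and is essentially the argument the paper intends (and omits): hypercyclicity is transferred exactly as in the preceding propositions via the matching weight conditions from \cite{GP2011}, and the density statement follows from the slice map $f\mapsto x^f$ together with the identity $x^{T_jf}=B^jx^f$, the isometry $\Vert x^f\Vert=\Vert f\Vert$, and Lemma~\ref{Dens-dens}. The one step you rightly single out as needing care --- the norm identity $\Vert T_1^jf\Vert=\Vert B^jx^f\Vert$ --- is verified correctly, and the concluding diagonal construction is the standard continuous analogue of Lemma~\ref{lem:density}.
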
 

By combining the above result with Example~\ref{exam:case4} from the previous section, we see that case (4) in Theorem \ref{thm:C-semigroup-result} can occur.

\bigskip 

\noindent \textbf{Acknowledgment.}
J. Li was supported in part by NSF of China (12222110). 
X. Wang was partially supported by STU Scientific Research Initiation Grant (SRIG, No. NTF22020) and NSF of China (12301230). 
J. Zhao was supported by NSF of China (12301226).
The authors would like to thank the referee for the careful reading and helpful suggestions.

\begin{bibsection}

\begin{biblist}

    \bib{ABMP2013}{article}{
        author={Albanese, Angela A.},
        author={Barrachina, Xavier},
        author={Mangino, Elisabetta M.},
        author={Peris, Alfredo},
        title={Distributional chaos for strongly continuous semigroups of operators},
        journal={Commun. Pure Appl. Anal.},
        volume={12},
        date={2013},
        number={5},
        pages={2069--2082},
        issn={1534-0392},
        review={\MR{3015670}},
        doi={10.3934/cpaa.2013.12.2069},
    }

    \bib{BP2012}{article}{
        author={Barrachina, Xavier},
        author={Peris, Alfred},
        title={Distributionally chaotic translation semigroups},
        journal={J. Difference Equ. Appl.},
        volume={18},
        date={2012},
        number={4},
        pages={751--761},
        issn={1023-6198},
        review={\MR{2905295}},
        doi={10.1080/10236198.2011.625945},
    }

    \bib{BM2009}{book}{
    author={Bayart, Fr\'{e}d\'{e}ric},
    author={Matheron, \'{E}tienne},
    title={Dynamics of Linear Operators},
    series={Cambridge Tracts in Mathematics},
    volume={179},
    publisher={Cambridge University Press, Cambridge},
    date={2009},
    pages={xiv+337},
    isbn={978-0-521-51496-5},
    review={\MR{2533318}},
    doi={10.1017/CBO9780511581113},
    }

    \bib{BR2015}{article}{
    author={Bayart, Fr\'{e}d\'{e}ric},
    author={Ruzsa, Imre Z.},
    title={Difference sets and frequently hypercyclic weighted shifts},
    journal={Ergodic Theory Dynam. Systems},
    volume={35},
    date={2015},
    number={3},
    pages={691--709},
    issn={0143-3857},
    review={\MR{3334899}},
    doi={10.1017/etds.2013.77},
    }

    \bib{BBMP2011}{article}{
    author={Berm\'{u}dez, Teresa},
    author={Bonilla, Antonio},
    author={Mart\'{\i}nez-Gim\'{e}nez, F\'elix},
    author={Peris, Alfredo},
    title={Li-Yorke and distributionally chaotic operators},
    journal={J. Math. Anal. Appl.},
    volume={373},
    date={2011},
    number={1},
    pages={83--93},
    issn={0022-247X},
    review={\MR{2684459}},
    doi={10.1016/j.jmaa.2010.06.011},
    }

    \bib{BBMP2013}{article}{
        author={Bernardes, Nilson da Costa, Jr.},
        author={Bonilla, Antonio},
        author={M\"uller, Vladim\'ir},
        author={Peris, Alfredo},
        title={Distributional chaos for linear operators},
        journal={J. Funct. Anal.},
        volume={265},
        date={2013},
        number={9},
        pages={2143--2163},
        issn={0022-1236},
        review={\MR{3084499}},
        doi={10.1016/j.jfa.2013.06.019},
    }

    \bib{BBPW2018}{article}{
        author={Bernardes, Nilson da Costa, Jr.},
        author={Bonilla, Antonio},
        author={Peris, Alfredo},
        author={Wu, Xinxing},
        title={Distributional chaos for operators on Banach spaces},
        journal={J. Math. Anal. Appl.},
        volume={459},
        date={2018},
        number={2},
        pages={797--821},
        issn={0022-247X},
        review={\MR{3732556}},
        doi={10.1016/j.jmaa.2017.11.005},
    }

    \bib{CMP2007}{article}{
        author={Conejero, Jose Alberto},
        author={M\"uller, Vladim\'ir},
        author={Peris, Alfredo},
        title={Hypercyclic behaviour of operators in a hypercyclic $C_0$-semigroup},
        journal={J. Funct. Anal.},
        volume={244},
        date={2007},
        number={1},
        pages={342--348},
        issn={0022-1236},
        review={\MR{2294487}},
        doi={10.1016/j.jfa.2006.12.008},
    }

    \bib{DSW1997}{article}{
        author={Desch, Wolfgang},
        author={Schappacher, Wilhelm},
        author={Webb, Glenn Francis},
        title={Hypercyclic and chaotic semigroups of linear operators},
        journal={Ergodic Theory Dynam. Systems},
        volume={17},
        date={1997},
        number={4},
        pages={793--819},
        issn={0143-3857},
        review={\MR{1468101}},
        doi={10.1017/S0143385797084976},
    }

    \bib{F1951}{article}{
        author={Fomin, Serge\u{\i} Vasil'evich},
        title={On dynamical systems with a purely point spectrum},
        language={Russian},
        journal={Doklady Akad. Nauk SSSR (N.S.)},
        volume={77},
        date={1951},
        pages={29--32},
        review={\MR{0043397}},
    }

    \bib{GM2014}{article}{
    author={Grivaux, Sophie},
    author={Matheron, \'{E}tienne},
    title={Invariant measures for frequently hypercyclic operators},
    journal={Adv. Math.},
    volume={265},
    date={2014},
    pages={371--427},
    issn={0001-8708},
    review={\MR{3255465}},
    doi={10.1016/j.aim.2014.08.002},
    }

    \bib{GP2011}{book}{
        author={Grosse-Erdmann, Karl-Goswin},
        author={Peris, Alfredo},
        title={Linear Chaos},
        series={Universitext},
        publisher={Springer, London},
        date={2011},
        pages={xii+386},
        isbn={978-1-4471-2169-5},
        review={\MR{2919812}},
        doi={10.1007/978-1-4471-2170-1},
    }
  
    \bib{JL2022}{article}{
        author={Jiang, Zhen},
        author={Li, Jian},
        title={Chaos for endomorphisms of completely metrizable groups and linear
                operators on Fr\'echet spaces},
        journal={J. Math. Anal. Appl.},
        volume={543},
        date={2025},
        number={2},
        pages={Paper No. 129033, 51pp},
        issn={0022-247X},
        review={\MR{4824645}},
        doi={10.1016/j.jmaa.2024.129033},
    }

    \bib{L2024}{article}{
        author={Li, Jian},
        title={Trichotomy for the orbits of a hypercyclic operator on a Banach space},
        journal={Proc. Amer. Math. Soc. },
        volume={152},
        date={2024},
        number={12},
        pages={5207--5217},
        doi={10.1090/proc/16974},
    }

    \bib{LTY2015}{article}{
        author={Li, Jian},
        author={Tu, Siming},
        author={Ye, Xiangdong},
        title={Mean equicontinuity and mean sensitivity},
        journal={Ergodic Theory Dynam. Systems},
        volume={35},
        date={2015},
        number={8},
        pages={2587--2612},
        issn={0143-3857},
        review={\MR{3456608}},
        doi={10.1017/etds.2014.41},
    }

    \bib{MOP2013}{article}{
    author={Mart\'{\i}nez-Gim\'{e}nez, F\'{e}lix},
    author={Oprocha, Piotr},
    author={Peris, Alfredo},
    title={Distributional chaos for operators with full scrambled sets},
    journal={Math. Z.},
    volume={274},
    date={2013},
    number={1-2},
    pages={603--612},
    issn={0025-5874},
    review={\MR{3054346}},
    doi={10.1007/s00209-012-1087-8},
    }

    \bib{M2017}{article}{
        author={Menet, Quentin},
        title={Linear chaos and frequent hypercyclicity},
        journal={Trans. Amer. Math. Soc.},
        volume={369},
        date={2017},
        number={7},
        pages={4977--4994},
        issn={0002-9947},
        review={\MR{3632557}},
        doi={10.1090/tran/6808},
    }

\end{biblist}

\end{bibsection}

\end{document}